\newcommand{\vect}[1]{\boldsymbol{#1}}
 \theoremstyle{plain}
\newtheorem{thm}{Theorem}[section]
\newtheorem{lemma}[thm]{Lemma}
\newtheorem{prop}[thm]{Proposition}
\newtheorem{cor}[thm]{Corollary}
\theoremstyle{definition}
\newtheorem{remark}[thm]{Remark}
\numberwithin{equation}{section}
\def\cC{\mathcal{C}}
\def\cR{\mathcal{R}}
\def\CC{\mathbb{C}}
\def\FF{\mathbb{F}}
\def\NN{\mathbb{N}}
\def\ZZ{\mathbb{Z}}
\def\scH{\mathscr{H}}
\renewcommand{\@makefnmark}{\mbox{\textsuperscript{}}}
\title{A classification of commutative parabolic Hecke algebras}
\author{Peter Abramenko 
\and
James Parkinson\footnote{Research supported under the Australian Research Council (ARC) discovery grant DP110103205.} 
\and
Hendrik Van Maldeghem}
\date{}
\begin{document}

\maketitle

\begin{abstract} Let $(W,S)$ be a Coxeter system with $I\subseteq S$ such that the parabolic subgroup $W_I$ is finite. Associated to this data there is a \textit{Hecke algebra} $\scH$ and a \textit{parabolic Hecke algebra} $\scH^I=\mathbf{1}_I\scH\mathbf{1}_I$ (over a ring $\ZZ[q_s]_{s\in S}$). We give a complete classification of the commutative parabolic Hecke algebras across all Coxeter types.\footnote{2010 Mathematics Subject Classification: 20C08}\footnote{Keywords: Hecke algebra, Coxeter group.}
\end{abstract}

\section*{Introduction}

\textit{Parabolic Hecke algebras} $\scH^I$ arise naturally as algebras of $P_I$ bi-invariant functions on semisimple Lie (or Kac-Moody) groups $G$ defined over finite fields, where $P_I$ is a type $I$ parabolic subgroup. As such they play an important role in the representation of these groups, in particular in studying the representations which have a $P_I$-fixed vector. If $\scH^I$ is commutative then $(G,P_I)$ is a \textit{Gelfand pair}. In this case the representation theory of $\scH^I$ is considerably simplified, and this leads to powerful results about representations of the group $G$. See, for example, \cite{borel}, \cite{macdonald} and \cite{matsumoto} for the affine case. Thus it is a natural question to ask when these algebras are commutative. 

Hecke algebras can be defined more generally, without reference to Kac-Moody groups as follows. Let $(W,S)$ be a Coxeter system, and let $(q_s)_{s\in S}$ be a family of commuting indeterminants with $q_s=q_t$ if and only if~$s$ and~$t$ are conjugate in~$W$. The \textit{Hecke algebra} is the associative $\ZZ[q_s]_{s\in S}$ algebra $\scH$ with free basis $\{T_w\mid w\in W\}$ and relations given by equations (\ref{eq:rel}) in Section~\ref{section:hecke}. Suppose that $I\subseteq S$ is such that the parabolic subgroup $W_I=\langle \{s\mid s\in I\}\rangle$ is finite. The \textit{$I$-parabolic Hecke algebra} $\scH^I$ is 
$$
\scH^I=\mathbf{1}_I\scH\mathbf{1}_I,\qquad\textrm{where}\qquad \mathbf{1}_I=\sum_{w\in W_I}T_w.
$$
It is these algebras (and their specialisations with $q_s\geq 1$) that we study here. We give a complete classification of the pairs $(W,I)$ with $W$ irreducible such that $\scH^I$ is commutative.

Let us put this result into perspective by surveying known results on the commutativity of parabolic Hecke algebras. Assume throughout that $W$ is irreducible. Consider the \textit{spherical case} (that is, $|W|<\infty$). The case $|S\backslash I|=1$ (that is, $W_I$ is a maximal parabolic subgroup of $W$) is classical, dating back to Iwahori~\cite{Iwahori} with proofs appearing in~\cite{CIK} (see also \cite[Theorem~10.4.11]{BCN}). It turns out that the statement is very neat in this case: $\scH^I$ is commutative if and only if each minimal length $W_I$ double coset representative is an involution. This statement does not hold in general (however we obtain a similar equivalence in Theorem~\ref{thm:commutativity}). The proof in \cite{CIK} uses elegant representation theory of the Coxeter group~$W$, along with counting arguments, semisimplicity of the Hecke algebra, and Tits' Deformation Theorem. These techniques do not readily generalise to the infinite case, as we lose the counting arguments, semisimplicity, and the Deformation Theorem.

The spherical case with $|S\backslash I|=1$ is also analysed in \cite{lehrer} via incidence structures and permutation representations. In particular \cite[Section~4]{lehrer} gives a thorough analysis of the classical types, and in \cite[Section~6]{lehrer} the question of studying the spherical case with $|S\backslash I|>1$ is raised. It is shown in \cite[Lemma~III.3.5]{krieg} that if $W$ is of type $A_n$ and $|S\backslash I|>1$ then $\scH^I$ is noncommutative. The main result in \cite{Anderson} extends this to show that if $W$ is spherical and $|S\backslash I|>1$ then $\scH^I$ is noncommutative. We give a very short proof of this fact across \textit{all} Coxeter types in Section~\ref{section:proof} (it appears to have been previously known only for the spherical types via a case by case argument involving computer calculations for the exceptional types).

Now suppose that $W$ is \textit{affine} (see Section~\ref{sect:coxeter}). 
If $I=S\backslash\{i\}$ with $i$ a \textit{special vertex} then it is well known that $\scH^I$ is commutative. This result is important in the representation theory of semisimple Lie groups defined over local fields such as the $p$-adics (see \cite{macdonald}, \cite{matsumoto}). The question of whether commutative parabolic Hecke algebras exist in the affine case with~$i$ not a special vertex is natural, yet to our knowledge has not been treated in the literature. It follows from our classification that there are in fact no such commutative parabolic Hecke algebras.

Now consider the case that $W$ is non-affine and infinite. In \cite[Theorem~3.5]{lecureux} it is shown that maximal parabolic Hecke algebras arising from group actions on locally finite thick buildings of type~$W$ are noncommutative. (However there is a mistake in the proof which needs to be fixed. L\'{e}cureux's Lemma~3.4 only holds for simple reflections, but is used for general reflections in the proof of his Theorem~3.5.) Such buildings can only exist if $m_{st}\in\{2,3,4,6,8,\infty\}$ for each $s,t\in S$ because the Feit-Higman Theorem restricts the possible rank~2 residues. If $W$ is \textit{crystallographic} (that is, $m_{st}\in\{2,3,4,6,\infty\}$, cf. \cite[p.25]{kumar}) then existence of such a building is guaranteed via Kac-Moody theory.



In summary, it appears that the following cases are not treated in the literature: (i) $|S\backslash I|>1$ (for general Coxeter types), (ii) the affine case with $I=S\backslash\{i\}$ and $i$ non-special, and (iii) the non-crystallographic non-affine infinite cases. It also appears that the existing techniques do not readily generalise to treat these cases. In this paper we give a systematic and complete classification of commutative parabolic Hecke algebras. Our proof uses a uniform technique to cover all cases (including the known cases). As a consequence it turns out that the three cases listed above give noncommutative parabolic Hecke algebras.

Let us briefly outline the structure of this paper. Section~\ref{sect:1} gives standard definitions and background on Coxeter groups and Hecke algebras, and in Section~\ref{sect:2} we state our classification theorem (Theorem~\ref{thm:list}). We also develop some elementary tests for commutativity and noncommutativity that will be used in Section~\ref{section:proof}, where we give the proof of the classification theorem. The proof has two parts. First we prove that those cases listed in Theorem~\ref{thm:list} give rise to commutative parabolic Hecke algebras. This is achieved using Lemma~\ref{lem:commutativity}, which is inspired by the statement of \cite[Theorem~3.1]{CIK}. Next we show that all remaining cases are noncommutative. This involves some Coxeter graph combinatorics to reduce the analysis to a finite number of cases. In each of these cases a word in the Coxeter group is exhibited, which when fed into our noncommutativity test (Proposition~\ref{prop:test}) proves that the parabolic Hecke algebra is noncommutative. We note that in order to apply our word arguments and diagram combinatorics to the general infinite cases, it is in fact necessary to give our elementary proof of the known noncommutative spherical cases. In the appendix we prove a lemma, make some comments on the structure of double cosets, and list the words we used to deduce noncommutativity.

The majority of the research presented in this paper was conducted in Ghent, Belgium, where the first two authors visited the third author on two occasions. We thank Ghent University for its hospitality. The second author thanks the Australian Research Council for its support under the ARC discovery grant DP110103205. Finally we would like to thank Bob Howlett for useful conversations regarding the results in~\cite{CIK} and for his help using his Coxeter group magma package which was useful in our investigations (however note that our proof does not require computer calculations).

\section{Definitions}\label{sect:1}

This section recalls some standard definitions and results on Coxeter groups, Hecke algebras, and specialisations of Hecke algebras. Standard references include \cite{AB}, \cite{bourbaki}, \cite{humphreys}, and \cite{lusztig}.

\subsection{Coxeter groups}\label{sect:coxeter}

A \textit{Coxeter system} $(W,S)$ is a group $W$ generated by a set $S$ with relations
$$
(st)^{m_{st}}=1\qquad\textrm{for all $s,t\in S$},
$$
where $m_{ss}=1$ and $m_{st}\in\mathbb{Z}_{\geq2}\cup\{\infty\}$ for all $s\neq t$. If $m_{st}=\infty$ then it is understood that there is no relation between $s$ and $t$. We will always assume that $|S|$ is finite. The \textit{Coxeter matrix} of $(W,S)$ is $M=(m_{st})$.

The \textit{length} $\ell(w)$ of $w\in W$ is 
$$
\ell(w)=\min\{n\in\NN\mid w=s_1\cdots s_{n}\textrm{ with } s_1,\ldots,s_{n}\in S\}.
$$
An expression $w=s_1\cdots s_{n}$ with $n=\ell(w)$ is called a \textit{reduced expression} for $w$.

The \textit{Coxeter graph} (or \textit{Coxeter diagram}) of $(W,S)$ is the graph with vertex set $S$ and with $s,t\in S$ joined by an edge if and only if $m_{st}\geq 3$. If $m_{st}\geq 4$ then the corresponding edge is labelled by $m_{st}$. A Coxeter system $(W,S)$ is \textit{irreducible} if its Coxeter graph is connected.

Finite Coxeter groups are called \textit{spherical Coxeter groups}. These are precisely the Coxeter groups whose Coxeter matrix $M$ is positive definite. The irreducible spherical Coxeter groups are classified (see \cite{coxeter}, \cite{bourbaki}, \cite{humphreys}). 

Coxeter groups which are not finite but contain a normal abelian subgroup such that the corresponding quotient group is finite are called \textit{affine Coxeter groups}. These are precisely the Coxeter groups whose Coxeter matrix is positive semidefinite but not positive definite. The irreducible affine Coxeter groups are classified (see \cite{bourbaki}, \cite{humphreys}). In each case the Coxeter graph of an irreducible affine Coxeter group is obtained from the Coxeter matrix of an irreducible spherical Coxeter graph by adding one extra vertex (usually labelled $0$). The vertices of the affine Coxeter graph which are in the orbit of $0$ under the action of the group of diagram automorphisms are called the \textit{special vertices}.

When it is necessary to fix a labelling of the generators of a spherical or affine Coxeter group we will adopt the conventions from~\cite{bourbaki}. 
The \textit{Bruhat partial order} $\leq$ on a Coxeter system $(W,S)$ can be described as follows. If $v,w\in W$ then $v\leq w$ if and only if there is a reduced expression $w=s_1\cdots s_n$ such that $v$ is equal to a \textit{subexpression} of $s_1\cdots s_n$ (that is, an expression obtained by deleting factors). If $v\leq w$ then $v$ is equal to a subexpression of \textit{every} reduced expression of~$w$.
The \textit{deletion condition} says that if $w=s_1\cdots s_n$ with $n>\ell(w)$ then there exists indices $i<j$ such that $w=s_1\cdots\hat{s_i}\cdots\hat{s_j}\cdots s_n$, where $\hat{s}$ indicates that the factor $s$ is omitted.

For $I\subseteq S$ let
$W_I$ be the subgroup of $W$ generated by $I$. Each double coset $W_IwW_I$ has a unique minimal length representative \cite[Proposition~2.23]{AB}. This representative is called \textit{$I$-reduced}, and we let
$$
R_I=\{w\in W\mid w\textrm{ is $I$-reduced}\}.
$$
Thus $R_I$ indexes the decomposition of $W$ into $W_IwW_I$ double cosets.

A subset $I\subseteq S$ is \textit{spherical} if the group $W_I$ is finite. Coxeter systems $(W,S)$ such that there exists a spherical subset $I=S\backslash\{i\}$ are called \textit{nearly finite Coxeter groups} in \cite{howlett}. This class includes the spherical and affine groups, but also many more Coxeter groups.

\subsection{Hecke algebras}\label{section:hecke}

Let $(W,S)$ be a Coxeter system, and let $q_s$, $s\in S$, be commuting indeterminants such that $q_s=q_t$ if and only if $s$ and $t$ are conjugate in~$W$. Let $\cR=\ZZ[q_s]_{s\in S}$ be the polynomial ring in $q_s$, $s\in S$, with integer coefficients. The Hecke algebra $\scH=\scH(W,S)$ is the associative $\cR$-algebra with free basis $\{T_w\mid w\in W\}$ (as an $\cR$-module) and multiplication laws
\begin{align}\label{eq:rel}
T_wT_s=\begin{cases}T_{ws}&\textrm{if $\ell(ws)=\ell(w)+1$}\\
 q_sT_{ws}+(q_s-1)T_w&\textrm{if $\ell(ws)=\ell(w)-1$}.
 \end{cases}
\end{align}

The condition on the parameters implies that the expression
$
q_w=q_{s_1}\cdots q_{s_{\ell}}\in\cR
$
does not depend on the particular choice of reduced expression $w=s_1\cdots s_{\ell}$. 

If $I$ is a spherical subset of $S$ then the element
$$
\mathbf{1}_I=\sum_{w\in W_I}T_w
$$
is in~$\scH$ (since the sum is finite). This element has the following attractive properties, where for finite subsets $X\subseteq W$ the \textit{Poincar\'{e} polynomial} of $X$ is
$
X(q)=\sum_{w\in X}q_w.
$

\begin{lemma}\label{lem:magic}
The element $\mathbf{1}_I$ satisfies $T_w\mathbf{1}_I=\mathbf{1}_IT_w=q_w\mathbf{1}_I$ for all $w\in W_I$, and $\mathbf{1}_I^2=W_I(q)\mathbf{1}_I$.
\end{lemma}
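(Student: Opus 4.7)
The plan is to prove the three claims in a telescoping order: first the single-generator case $T_s\mathbf{1}_I=\mathbf{1}_IT_s=q_s\mathbf{1}_I$ for $s\in I$, then the general $T_w\mathbf{1}_I=\mathbf{1}_IT_w=q_w\mathbf{1}_I$ by induction on $\ell(w)$, and finally $\mathbf{1}_I^2=W_I(q)\mathbf{1}_I$ by distributivity.

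For the base case I would handle $\mathbf{1}_IT_s$ first, since the given multiplication rule (\ref{eq:rel}) directly governs right multiplication by $T_s$. Partition $W_I=W_I^+\sqcup W_I^-$ according to whether $\ell(ws)>\ell(w)$ or $\ell(ws)<\ell(w)$; right multiplication by $s$ is a length-reversing bijection between $W_I^+$ and $W_I^-$ (both sets live in $W_I$ because $s\in I$). Applying (\ref{eq:rel}) termwise gives
\[
\mathbf{1}_IT_s=\sum_{w\in W_I^+}T_{ws}+\sum_{w\in W_I^-}\bigl(q_sT_{ws}+(q_s-1)T_w\bigr),
\]
and relabelling in the two sums yields $q_s\sum_{u\in W_I^-}T_u+q_s\sum_{u\in W_I^+}T_u+(q_s-1)\sum_{w\in W_I^-}T_w - \sum_{w\in W_I^-}T_w$; the bookkeeping collapses to $q_s\mathbf{1}_I$. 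For the mirror identity $T_s\mathbf{1}_I=q_s\mathbf{1}_I$ I would first derive the left-multiplication version of (\ref{eq:rel}): specialising (\ref{eq:rel}) at $w=s$ gives the quadratic relation $T_s^2=q_sT_e+(q_s-1)T_s$, and combined with the identity $T_{sw}=T_sT_w$ when $\ell(sw)>\ell(w)$ (itself a consequence of iterating (\ref{eq:rel}) for a reduced expression beginning with $s$) this yields $T_sT_w=T_{sw}$ or $T_sT_w=q_sT_{sw}+(q_s-1)T_w$ in the two respective cases. Then the same partition argument (now with $W_I^{\pm}$ keyed to $\ell(sw)$) gives $T_s\mathbf{1}_I=q_s\mathbf{1}_I$.

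For the induction step, fix $w\in W_I$ with reduced expression $w=s_1\cdots s_n$. Iterating the first clause of (\ref{eq:rel}) shows $T_w=T_{s_1}\cdots T_{s_n}$ (each prefix remains reduced). Therefore
\[
\mathbf{1}_IT_w=\mathbf{1}_IT_{s_1}\cdots T_{s_n}=q_{s_1}\mathbf{1}_IT_{s_2}\cdots T_{s_n}=\cdots=q_{s_1}\cdots q_{s_n}\mathbf{1}_I=q_w\mathbf{1}_I,
\]
and symmetrically $T_w\mathbf{1}_I=q_w\mathbf{1}_I$. The independence of $q_w$ from the chosen reduced expression (noted just before the lemma) ensures this is well defined. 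Finally, $\mathbf{1}_I^2=\sum_{w\in W_I}\mathbf{1}_IT_w=\sum_{w\in W_I}q_w\mathbf{1}_I=W_I(q)\mathbf{1}_I$.

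The only mild obstacle is the asymmetry in the statement of (\ref{eq:rel}): it is written only for $T_wT_s$, so the identity $T_s\mathbf{1}_I=q_s\mathbf{1}_I$ is not immediate and requires deriving the companion left-multiplication rule from the quadratic relation. Once that is in hand, the rest is purely bookkeeping with the partition $W_I=W_I^+\sqcup W_I^-$ and a straightforward induction.
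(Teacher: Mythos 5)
Your argument is correct and takes essentially the same route as the paper's proof: the same partition $W_I=W_I^+\sqcup W_I^-$ with the bijection $W_I^+s=W_I^-$, the same derivation of the left-handed rule $T_sT_w=q_sT_{sw}+(q_s-1)T_w$ for $\ell(sw)<\ell(w)$ from (\ref{eq:rel}), and the same reduction of the general case to the single-generator case by induction. One trivial slip in the bookkeeping: the relabelled expression should read $\sum_{u\in W_I^-}T_u+q_s\sum_{u\in W_I^+}T_u+(q_s-1)\sum_{w\in W_I^-}T_w$ (the first sum carries no factor of $q_s$, and there is no additional $-\sum_{w\in W_I^-}T_w$ term beyond the one inside $(q_s-1)$), which then collapses to $q_s\mathbf{1}_I$ as you state.
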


\begin{proof} By induction it suffices to show that $T_s\mathbf{1}_I=\mathbf{1}_IT_s=q_s\mathbf{1}_I$ for each $s\in I$. We have
$$
\mathbf{1}_IT_s=\sum_{w\in W_I}T_wT_s.
$$
Split the sum into two parts, over the sets $W_I^{\pm}=\{w\in W_I\mid \ell(ws)=\ell(w)\pm 1\}$. Using the defining relations (\ref{eq:rel}) and the fact that $W_I^+s=W_I^-$ shows that $\mathbf{1}_IT_s=q_s\mathbf{1}_I$. The $T_s\mathbf{1}_I$ case is similar, using the formula $T_sT_w=q_sT_{sw}+(q_s-1)T_w$ if $\ell(sw)=\ell(w)-1$ (which follows from (\ref{eq:rel})). The fact that $\mathbf{1}_I^2=W_I(q)\mathbf{1}_I$ follows immediately.
\end{proof}

The \textit{structure constants} $c_{u,v;w}\in\ZZ[q_s]_{s\in S}$ of $\scH$ relative to the basis $\{T_w\mid w\in W\}$ are defined by the equations
\begin{align}
\label{eq:stct}T_uT_v=\sum_{w\in W}c_{u,v;w}T_w\qquad\textrm{for all $u,v\in W$}.
\end{align}

\begin{lemma}\label{lem:cst1} The structure constants $c_{u,v;w}$ are polynomials in $\{q_s-1\mid s\in S\}$ with nonnegative integer coefficients.
\end{lemma}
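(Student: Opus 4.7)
The plan is to induct on $\ell(v)$. For the base case $\ell(v)=0$ we have $v=e$, so $T_uT_v=T_u$, giving $c_{u,e;u}=1$ and $c_{u,e;w}=0$ for $w\neq u$; these are trivially nonnegative integer polynomials in $\{q_s-1\mid s\in S\}$.

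For the inductive step, I would pick $s\in S$ with $v=v's$ and $\ell(v')=\ell(v)-1$, so that $T_v=T_{v'}T_s$ and hence
\begin{equation*}
T_uT_v=\sum_{w\in W}c_{u,v';w}\,T_wT_s.
\end{equation*}
By the inductive hypothesis each $c_{u,v';w}$ is a nonnegative integer polynomial in $\{q_s-1\mid s\in S\}$, so it remains to expand each $T_wT_s$ via the defining relations (\ref{eq:rel}) and collect coefficients on each $T_y$. A short bookkeeping using (\ref{eq:rel}) (splitting on whether $\ell(ys)<\ell(y)$ or $\ell(ys)>\ell(y)$) yields the recursive formulas
\begin{equation*}
c_{u,v;y}=\begin{cases}c_{u,v';ys}+(q_s-1)\,c_{u,v';y}&\text{if }\ell(ys)<\ell(y),\\ q_s\,c_{u,v';ys}&\text{if }\ell(ys)>\ell(y).\end{cases}
\end{equation*}

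The key observation that makes the induction close is the reparametrisation $q_s=1+(q_s-1)$: in the shifted variables $x_s=q_s-1$, both $1$, $q_s$, and $q_s-1$ are polynomials with nonnegative integer coefficients. Since sums and products of polynomials with nonnegative integer coefficients in the $x_s$ again have this property, the two recursive formulas above propagate the desired property from $v'$ to $v$. There is no real obstacle here; the only conceptual content is noticing that while the structure constants need not be nonnegative integer polynomials in the $q_s$ themselves (indeed the relation (\ref{eq:rel}) already features a $q_s-1$), they become so once one shifts variables by $1$.
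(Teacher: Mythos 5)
Your proof is correct and follows essentially the same route as the paper: induction on $\ell(v)$ via $T_uT_v=(T_uT_{v'})T_s$, the same two-case recursion for the structure constants, and the same closing observation that $q_s=1+(q_s-1)$ is a nonnegative integer polynomial in the shifted variables.
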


\begin{proof} 
Induction on $\ell(v)$, with $\ell(v)=0$ trivial. If $\ell(vs)=\ell(v)+1$ then $T_uT_{vs}=(T_uT_v)T_s$. Expanding the left hand side of this equation using (\ref{eq:stct}) and the right hand side using (\ref{eq:stct}) and (\ref{eq:rel}) gives
$$
c_{u,vs;w}=\begin{cases}
c_{u,v;ws}q_s&\textrm{if $\ell(ws)=\ell(w)+1$}\\
c_{u,v;ws}+c_{u,v;w}(q_s-1)&\textrm{if $\ell(ws)=\ell(w)-1$}.
\end{cases}
$$
By the induction hypothesis $c_{u,v;w}$ and $c_{u,v;ws}$ are polynomials in $\{q_s-1\mid s\in S\}$ with nonnegative integer coefficients, and so $c_{u,vs;w}$ is too (since $q_s=1+(q_s-1)$).
\end{proof}

\subsection{Parabolic Hecke algebras}

Let $\scH$ be the Hecke algebra with Coxeter system $(W,S)$ and let $I\subseteq S$ be spherical. The \textit{$I$-parabolic Hecke algebra} is 
$$
\scH^I=\mathbf{1}_I\scH\mathbf{1}_I.
$$
We note that in general $\scH^I$ is not unital (as $W_I(q)$ is not an invertible element of $\ZZ[q_s]_{s\in S}$).

Let $I$ be spherical and let $w\in R_I$ be $I$-reduced. We define
$$
T_w^I=\frac{W_I(q)}{W_{I\cap wIw^{-1}}(q)}\mathbf{1}_IT_w\mathbf{1}_I.
$$
The Poincar\'{e} polynomial $W_I(q)$ is divisible by $W_{I\cap wIw^{-1}}(q)$ (this follows from equation (\ref{eq:stabiliser}) below and statement (a) immediately following (\ref{eq:stabiliser})), and so 
the quotient is really an element of the coefficient ring $\cR=\ZZ[q_s]_{s\in S}$.

The set $\{T_w^I\mid w\in R_I\}$ is a linear basis for $\scH^I$ (Proposition~\ref{prop:c}). Let $c_{u,v;w}^I$, $u,v,w\in R_I$, be the structure constants of $\scH^I$ relative to this basis, defined by the equations
$$
T_u^IT_v^I=\sum_{w\in R_I}c_{u,v;w}^IT_w^I\qquad\textrm{for $u,v\in R_I$}.
$$
If $I=\emptyset$ then $\mathbf{1}_I=1$ (the identity in $\scH$), and so $T_w^I=T_w$ and $\scH^I=\scH$. Thus $c_{u,v;w}^{\emptyset}=c_{u,v;w}$ are the structure constants appearing in~(\ref{eq:stct}). Part (ii) of the following proposition relates the structure constants $c_{u,v;w}^I$ to the more elementary structure constants $c_{u,v;w}$.

\begin{prop}\label{prop:c} Let $I\subseteq S$ be spherical.
\begin{enumerate}
\item[(i)] For $w\in R_I$ we have
$$
T_w^{I}=W_I(q)\sum_{z\in W_IwW_I}T_z,
$$
and $\{T_w^I\mid w\in R_I\}$ is a linear basis for $\scH^I$.
\item[(ii)] Let $u,v,w\in R_I$. For any $z\in W_IwW_I$ we have
\begin{align*}
c_{u,v;w}^I=W_I(q)\sum_{\substack{x\in  W_IuW_I\\
y\in W_IvW_I}}c_{x,y;z}.
\end{align*}
\end{enumerate}
\end{prop}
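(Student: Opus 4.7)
The proof of (i) proceeds in two steps: first I establish the structural form $\mathbf{1}_I T_w \mathbf{1}_I = \alpha_w \sum_{z \in W_I w W_I} T_z$ for some $\alpha_w \in \cR$, then I identify $\alpha_w$. By Lemma~\ref{lem:magic}, the element $X := \mathbf{1}_I T_w \mathbf{1}_I$ satisfies $T_s X = X T_s = q_s X$ for every $s \in I$. Writing $X = \sum_y \beta_y T_y$, expanding $T_s X$ via (\ref{eq:rel}), and comparing $T_y$-coefficients forces $\beta_{sy} = \beta_y$ in both cases $\ell(sy) > \ell(y)$ and $\ell(sy) < \ell(y)$, so $\beta$ is left- and (symmetrically) right-$W_I$-invariant. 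A short induction on $\ell(a) + \ell(b)$, using the closure $W_I v W_I \cdot s \subseteq W_I v W_I$ for $s \in I$, shows that the support of $T_a T_v T_b$ lies inside $W_I v W_I$ for any $v \in W$, $a, b \in W_I$. Applied with $v = w \in R_I$, this yields the claimed structural form. The same eigenvalue argument applied to $\mathbf{1}_I T_v \mathbf{1}_I$ for arbitrary $v$ shows that every element of $\scH^I$ is a combination of such double-coset sums, and disjointness of supports then gives the basis claim once the formula for $T_w^I$ is established.

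To pin down $\alpha_w$, I apply the algebra homomorphism $\sigma: \scH \to \cR$ sending $T_s \mapsto q_s$ (well-defined because the quadratic and braid relations in (\ref{eq:rel}) are preserved, using $q_s = q_t$ for conjugate simple reflections). This yields the scalar identity $W_I(q)^2 q_w = \alpha_w (W_I w W_I)(q)$. The Poincar\'e polynomial of the double coset is then computed via the refined decomposition $W_I w W_I = \bigsqcup_{b \in C} W_I w b$, where $C$ is a set of minimal coset representatives of $W_{I \cap w^{-1} I w} \backslash W_I$ and the length is additive: $\ell(awb) = \ell(a) + \ell(w) + \ell(b)$ for $a \in W_I$, $b \in C$. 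Since $x \mapsto wxw^{-1}$ restricts to a length-preserving (hence $q$-preserving) bijection $W_{I \cap w^{-1} I w} \to W_{I \cap wIw^{-1}}$---length-preserving because its image on generators already lies in $I \subseteq S$---we obtain $W_{I \cap w^{-1} I w}(q) = W_{I \cap wIw^{-1}}(q)$ and hence $(W_I w W_I)(q) = W_I(q)^2 q_w / W_{I \cap wIw^{-1}}(q)$. Therefore $\alpha_w = W_{I \cap wIw^{-1}}(q)$, and substituting into the definition of $T_w^I$ yields (i).

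For (ii), substitute the formula of (i) into both sides of $T_u^I T_v^I = \sum_w c_{u,v;w}^I T_w^I$. The left-hand side expands as $W_I(q)^2 \sum_{x,y,z} c_{x,y;z} T_z$ (summed over $x \in W_I u W_I$, $y \in W_I v W_I$); the right-hand side is $W_I(q) \sum_w c_{u,v;w}^I \sum_{z \in W_I w W_I} T_z$. Comparing $T_z$-coefficients for any fixed $z \in W_I w W_I$ yields the stated formula, and as a byproduct shows independence of the sum from the choice of $z$.

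The main obstacle is the identification of $\alpha_w$: the eigenvalue argument forces the structural form cleanly but gives no control over the scalar. Extracting it requires the refined coset decomposition with length additivity, together with the subtle but essential observation that conjugation by $w$ preserves length on $W_{I \cap w^{-1} I w}$---a fact that rests on the image generators landing back inside $S$. Both ingredients are standard Coxeter-theoretic facts, but they are the only non-routine steps in the argument.
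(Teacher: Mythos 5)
Your proof is correct, but for part (i) it takes a genuinely different route from the paper's. The paper computes $\mathbf{1}_IT_w\mathbf{1}_I$ head-on: it writes $\mathbf{1}_I=\sum_{x\in M_{I,w}}\sum_{y\in W_{I,w}}T_xT_y$ using minimal coset representatives of $W_I/W_{I,w}$ with $W_{I,w}=W_{I\cap wIw^{-1}}$, collapses the inner sum via $T_yT_w\mathbf{1}_I=q_yT_w\mathbf{1}_I$ (pushing $y$ through $w$ and absorbing it into $\mathbf{1}_I$), and then expands $T_xT_w\mathbf{1}_I$ by length-additivity to land directly on $W_I(q)\sum_{z\in W_IwW_I}T_z$ --- the normalizing constant $W_{I\cap wIw^{-1}}(q)$ appears and cancels in one stroke. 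You instead separate shape from normalization: the bi-eigenvector argument ($T_s X=XT_s=q_sX$ forces the coefficient function to be constant on $W_I$-double cosets) plus the support containment gives $\mathbf{1}_IT_w\mathbf{1}_I=\alpha_w\sum_{z\in W_IwW_I}T_z$, and the scalar is then extracted by evaluating the index character $T_s\mapsto q_s$ and computing the Poincar\'e polynomial of the double coset. Your route costs two extra (standard, and correctly justified) inputs --- the well-definedness of the one-dimensional representation $\sigma$, and the identity $W_{I\cap w^{-1}Iw}(q)=W_{I\cap wIw^{-1}}(q)$ via the $q$-preserving conjugation $x\mapsto wxw^{-1}$ --- whereas the paper only needs the right-sided Kilmoyer decomposition it already states as facts (a) and (b); note you could have avoided the conjugation step entirely by computing $(W_IwW_I)(q)$ from that right-sided decomposition, which gives $M_{I,w}(q)\,q_w\,W_I(q)=W_I(q)^2q_w/W_{I\cap wIw^{-1}}(q)$ directly. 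What your approach buys is robustness: the eigenvector argument simultaneously shows that every $\mathbf{1}_IT_v\mathbf{1}_I$ is proportional to a double-coset sum, which the paper has to argue separately for the spanning claim. (Both you and the paper gloss over the same point there: $\mathbf{1}_IT_v\mathbf{1}_I$ is only a multiple of $T_w^I$ up to the non-unit factor $W_I(q)/W_{I\cap wIw^{-1}}(q)$, so spanning over $\cR$ should strictly be read over the fraction field or with $\scH^I$ taken as the span of the $T_w^I$; since your treatment matches the paper's, this is not a defect of your argument specifically.) Part (ii) is identical to the paper's coefficient comparison.
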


\begin{proof} Let $W_{I,w}$ be the subgroup of $W_I$ stabilising $wW_I$, and let $M_{I,w}$ be a fixed set of minimal length representatives of cosets in $W_I/W_{I,w}$. Notice that $s\in S\cap W_{I,w}$ if and only if $s\in W_I$ and $s\in wW_Iw^{-1}$, and hence (see \cite[Lemma~2.25]{AB})
\begin{align}\label{eq:stabiliser}
W_{I,w}=W_I\cap wW_Iw^{-1}=W_{I\cap wIw^{-1}}.
\end{align}
If $w\in R_I$ then (see \cite[\S2.3.2]{AB})
\begin{enumerate}
\item[(a)] Each $u\in W_I$ can be written in exactly one way as 
		$u=xy$ with $x\in M_{I,w}$ and $y\in W_{I,w}$. Moreover 
	$
	\ell(u)=\ell(x)+\ell(y)$ for any such expression.
\item[(b)] Each $v\in W_IwW_I$ can be written in exactly one way as
	$
	v=xwy$ with $x\in M_{I,w}$ and $y\in W_I$.
	Moreover $\ell(v)=\ell(x)+\ell(w)+\ell(y)$ for any such expression.
	\end{enumerate}
Using (a) we have
\begin{align*}
\mathbf{1}_IT_w\mathbf{1}_I&=\sum_{u\in W_I}T_uT_w\mathbf{1}_I=\sum_{x\in M_{I,w}}\sum_{y\in W_{I,w}}T_xT_yT_w\mathbf{1}_I.
\end{align*}
Since $w$ is $I$-reduced we have
$\ell(yw)=\ell(y)+\ell(w)$ for each $y\in W_{I,w}$, and $yw=wy'$ for some $y'\in W_{I,w}$ with $\ell(wy')=\ell(w)+\ell(y')$. This implies that $q_{y'}=q_y$, and (\ref{eq:rel}) and Lemma~\ref{lem:magic} give
$$
T_yT_w\mathbf{1}_I=T_{yw}\mathbf{1}_I=T_{wy'}\mathbf{1}_I=T_wT_{y'}\mathbf{1}_I=q_yT_w\mathbf{1}_I.
$$
Thus by (\ref{eq:stabiliser}) we have $\sum_{y\in W_{I,w}}T_xT_yT_w\mathbf{1}_I=W_{I\cap wIw^{-1}}(q)T_xT_w\mathbf{1}_I$, and hence by (b) we compute
$$
T_w^{I}=W_I(q)\sum_{x\in M_{I,w}}T_xT_w\mathbf{1}_I=W_I(q)\sum_{x\in M_{I,w}}\sum_{y\in W_I}T_xT_wT_y=W_I(q)\sum_{z\in W_IwW_I}T_{z}.
$$
This formula shows that $\{T_w^I\mid w\in R_I\}$ is a linearly independent set (since double cosets are either equal or disjoint, and $\{T_w\mid w\in W\}$ is a basis for $\scH$). It also spans $\scH^I$, for if $z\in W$ then $z\in W_IwW_I$ for some $w\in R_I$, and since $w$ is $I$-reduced we have $z=xwy$ with $x\in W_I$, $y\in W_I$, and $\ell(z)=\ell(x)+\ell(w)+\ell(y)$. Then using (\ref{eq:rel}) and Lemma~\ref{lem:magic} we have
$
\mathbf{1}_IT_z\mathbf{1}_I=\mathbf{1}_IT_xT_wT_y\mathbf{1}_I=q_xq_y\mathbf{1}_IT_w\mathbf{1}_I.
$
This completes the proof of (i).

To prove (ii) we use (i) and the expansion $T_xT_y=\sum_z c_{x,y;z}T_z$ to write
\begin{align*}
T_u^{I}T_v^{I}&=W_I(q)^2\sum_{\substack{x\in W_IuW_I\\ y\in W_IvW_I}}T_xT_y=W_I(q)^2\sum_{z\in W}\Bigg(\sum_{\substack{x\in W_IuW_I\\
y\in W_IvW_I}}c_{x,y;z}\Bigg)T_z.
\end{align*}
On the other hand we have
$$
T_u^IT_v^I=\sum_{w\in R_I}c_{u,v;w}^IT_w^I=W_I(q)\sum_{w\in R_I}\bigg(c_{u,v;w}^I\sum_{z\in W_IwW_I} T_z\bigg).
$$
The result follows by comparing coefficients of $T_z$ in these expressions.
\end{proof}

\begin{remark} The structure constants $c_{u,v;w}^I$ in the spherical case are studied in \cite{BC} and~\cite{gomi}. In the affine case formulae are available using \textit{positively folded alcove walks} (see \cite{schwer}). 
\end{remark}

\subsection{Specialisations of the Hecke algebra}

One is often interested in \textit{specialisations} of the Hecke algebra, where the parameters $q_s$, $s\in S$, are chosen to be specific complex numbers. Let us briefly describe this construction. Let $\vect{\tau}=(\tau_s)_{s\in S}$ be a sequence of complex numbers with $\tau_s=\tau_t$ whenever $s$ and $t$ are conjugate in~$W$. Let $\psi:\cR\to\CC$ be the ring homomorphism given by $\psi(q_s)=\tau_s$ for each $s\in S$. Then $\CC$ becomes a $(\CC,\cR)$-bimodule via $(\lambda,\mu,x)\mapsto \lambda\mu\psi(x)$ for all $\lambda,\mu\in\CC$ and $x\in\cR$. The \textit{specialised Hecke algebra} is $\scH_{\vect{\tau}}=\CC\otimes_{\cR}\scH$. This is an algebra over $\CC$ with basis $\{1\otimes T_w\mid w\in W\}$. Note that the specialisation of $\scH$ with $\tau_s=1$ for all $s\in S$ is equal to the group algebra of~$W$.

Let $\scH^I_{\vect{\tau}}$ be the specialisation of $\scH^I$ with parameters $\vect{\tau}=(\tau_s)$. Our classification of commutative parabolic Hecke algebras applies to the `generic' parabolic Hecke algebras $\scH^I$ (defined over $\ZZ[q_s]_{s\in S}$) \textit{and} to the specialisations $\scH^I_{\vect{\tau}}$ with $\tau_s\geq 1$ for all $s\in S$. Potential problems arise for other values of $\tau_s$, since our argument in Corollary~\ref{cor:comcondition}, which relies on Corollary~\ref{cor:cst1} below, breaks down.

The structure constants of the specialised algebra $\scH^I_{\vect{\tau}}$ are obtained by applying the evaluation homomorphism $\psi:\ZZ[q_s]_{s\in S}\to\CC$ with $\psi(q_s)=\tau_s$ to the structure constants of the generic algebra~$\scH^I$. 

\begin{cor}\label{cor:cst1} If $\tau_s\geq 1$ for all $s\in S$ then $\psi(c_{u,v;w}^I)\geq0$, and if the constant term of $c_{u,v;w}$ when written as a polynomial in the variables $q_s-1$ is nonzero then $\psi(c_{u,v;w}^I)>0$.
\end{cor}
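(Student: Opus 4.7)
The plan is to deduce the corollary directly from Proposition~\ref{prop:c}(ii) and Lemma~\ref{lem:cst1} by showing that $c_{u,v;w}^I$, when expressed in the variables $q_s-1$, is itself a polynomial with nonnegative integer coefficients, and then evaluating it under $\psi$.

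First I would invoke Proposition~\ref{prop:c}(ii): for any choice of $z\in W_IwW_I$,
$$
c_{u,v;w}^I=W_I(q)\sum_{\substack{x\in W_IuW_I\\ y\in W_IvW_I}}c_{x,y;z}.
$$
By Lemma~\ref{lem:cst1} each $c_{x,y;z}$ is a polynomial in $\{q_s-1\mid s\in S\}$ with nonnegative integer coefficients. The Poincar\'e polynomial $W_I(q)=\sum_{w\in W_I}q_w$, rewritten via $q_s=1+(q_s-1)$, is also a polynomial in the same variables with nonnegative integer coefficients, and its constant term equals $W_I(1)=|W_I|\geq 1$. Since products and sums of polynomials with nonnegative integer coefficients in $\{q_s-1\}$ again have nonnegative integer coefficients in $\{q_s-1\}$, the same is true for $c_{u,v;w}^I$.

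The first claim is now immediate: when $\tau_s\geq 1$ we have $\psi(q_s-1)=\tau_s-1\geq 0$, so evaluating at these nonnegative values a polynomial in $q_s-1$ with nonnegative integer coefficients yields $\psi(c_{u,v;w}^I)\geq 0$. For the second claim I would specialise to $z=w$ in the formula above. Then the pair $(x,y)=(u,v)$ occurs in the summation (as $u\in W_IuW_I$ and $v\in W_IvW_I$), contributing the term $c_{u,v;w}$. If the constant term (in the variables $q_s-1$) of $c_{u,v;w}$ is nonzero, then by Lemma~\ref{lem:cst1} it is a positive integer; combined with the fact that all other summands have nonnegative constant term and that $W_I(q)$ has constant term $|W_I|\geq 1$, it follows that the constant term of $c_{u,v;w}^I$ is a positive integer. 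Since all other coefficients contribute nonnegatively after applying $\psi$, we conclude $\psi(c_{u,v;w}^I)\geq |W_I|>0$.

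There is no real obstacle here: the corollary is essentially a bookkeeping consequence of the positivity built into Lemma~\ref{lem:cst1}, the formula in Proposition~\ref{prop:c}(ii), and the remark that $W_I(q)$ has nonnegative integer coefficients when expressed in the $q_s-1$. The only subtlety is the choice $z=w$ in Proposition~\ref{prop:c}(ii), which is what allows one to read off a specific contribution $c_{u,v;w}$ inside the sum and thereby transfer the positivity hypothesis on the original structure constant to the parabolic one.
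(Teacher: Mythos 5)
Your argument is correct and follows essentially the same route as the paper: apply Lemma~\ref{lem:cst1} to get nonnegativity of each $\psi(c_{x,y;z})$, then use the formula of Proposition~\ref{prop:c}(ii) together with $W_I(\vect{\tau})>0$. The paper's proof is terser and leaves implicit the point you spell out --- that taking $z=w$ makes the summand $c_{u,v;w}$ appear explicitly, transferring the positive constant term to $c_{u,v;w}^I$ --- so your write-up is a faithful, slightly more detailed version of the same argument.
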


\begin{proof}
By Lemma~\ref{lem:cst1} the claim is true for $I=\emptyset$ (where $c_{u,v;w}^I=c_{u,v;w}$), and by Proposition~\ref{prop:c} we see that the claim holds for general (spherical) $I$, since $W_I(\vect{\tau})>0$ if $\tau_s\geq1$ for all $s\in S$.
\end{proof}

\begin{remark}\label{rem:kac}
If $\tau_s=p^n$ for all $s\in S$ with $p$ a prime then $\scH_{\vect{\tau}}\cong \cC_c(B\backslash G/B)$. Here $G$ is a Kac-Moody group of type $W$ over the finite field~$\FF_{p^n}$ (see \cite{titskac2}), $B$ is the standard Borel subgroup of~$G$, and $\cC_c(B\backslash G/B)$ is the convolution algebra of $B$ bi-invariant functions $f:G\to\CC$ supported on finitely many $B$ double cosets. For such a Kac-Moody group to exist it is necessary and sufficient that $m_{st}\in\{2,3,4,6,\infty\}$ for each $s,t\in S$ (see \cite[Proposition~1.3.21]{kumar}). Similarly $\scH_{\vect{\tau}}^I\cong \cC_c(P_I\backslash G/P_I)$ where $P_I$ is the standard $I$-parabolic subgroup $P_I=\bigsqcup_{w\in W_I}BwB.$ 
\end{remark}

\begin{remark}
Suppose that $\tau_s=\tau$ for all $s\in S$. If $W$ is spherical then $\scH_{\vect{\tau}}$ is isomorphic to the group algebra of~$W$ for all values of $\tau\in\CC^{\times}$ except for roots of the Poincar\'{e} polynomial $W(\tau)$ \cite[\S68A]{CR}. This statement is usually \textit{not} true for infinite Coxeter groups~$W$ (see~\cite[\S11.7]{Xi}).
\end{remark}

\section{Commutativity of $\scH^I$}\label{sect:2}

\subsection{Statement of results}\label{sect:statement}

The following classification theorem is the main result of this paper. The proof is given in the next section after giving some preliminary observations in this section. We use Bourbaki \cite{bourbaki} conventions for the labelling of the nodes of spherical and affine Coxeter systems. In the $H_3$ and $H_4$ cases (where there is no explicit labelling given in \cite{bourbaki}) we take $m_{12}=3$ and $m_{23}=5$ in the $H_3$ case, and $m_{12}=m_{23}=3$ and $m_{34}=5$ in the $H_4$ case.

If $X_n$ is a spherical Coxeter diagram and if $i$ is a vertex of $X_n$  then we write $X_{n,i}$ to denote the case where $(W,S)$ has type $X_n$ and $I=S\backslash\{i\}$. Similarly if $\tilde{X}_n$ is an affine diagram then the notation $\tilde{X}_{n,i}$ means that $(W,S)$ has type $\tilde{X}_n$ and $I=S\backslash\{i\}$.

\begin{thm}\label{thm:list} Let $(W,S)$ be irreducible, let $I\subseteq S$ be spherical, and let $\vect{\tau}=(\tau_s)$ with $\tau_s\geq 1$ for each $s\in S$. The $I$-parabolic Hecke algebras $\scH^I$ and $\scH_{\vect{\tau}}^I$ are noncommutative if $|S\backslash I|>1$. If $I=S\backslash\{i\}$ then $\scH^I$ and $\scH_{\vect{\tau}}^I$ are commutative in the cases
\begin{enumerate}
\item[$\bullet$] $A_{n,i}$ ($1\leq i\leq n$), $B_{n,i}$ ($1\leq i\leq n$), $D_{n,i}$ ($1\leq i\leq   n/2$ or $i=n-1,n$), $E_{6,1}$, $E_{6,2}$, $E_{6,6}$, $E_{7,1}$, $E_{7,2}$, $E_{7,7}$, $E_{8,1}$, $E_{8,8}$, $F_{4,1}$, $F_{4,4}$, $H_{3,1}$, $H_{3,3}$, $H_{4,1}$, $I_{2}(p)_{i}$ ($i=1,2$), and
\item[$\bullet$] all affine cases $\tilde{X}_{n,i}$ with $i$ a special type,
\end{enumerate}
and noncommutative otherwise. 
\end{thm}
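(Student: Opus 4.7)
The plan is to prove the two halves of the classification separately, using Lemma~\ref{lem:commutativity} for the commutative cases and Proposition~\ref{prop:test} together with Corollary~\ref{cor:cst1} for the remaining cases. Both tools operate at the level of the basis $\{T_w^I\mid w\in R_I\}$ of Proposition~\ref{prop:c}(i), so the argument reduces to extracting combinatorial information from the $W_I$-double-coset structure of $W$.

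First I would tackle the commutative cases. Using Proposition~\ref{prop:c}(ii), commutativity is equivalent to the identity $c_{u,v;w}^I=c_{v,u;w}^I$ for all $u,v,w\in R_I$; Lemma~\ref{lem:commutativity} repackages this as a symmetry condition on $W_I$-double cosets, in the spirit of the Curtis--Iwahori--Kilmoyer involution criterion but available beyond the spherical maximal-parabolic setting. For the classical finite types $A_{n,i}$, $B_{n,i}$, $D_{n,i}$ in the list, the criterion is verified through explicit minimal-length double coset representatives. The remaining spherical cases in the theorem form a short finite list and can be checked directly from the length function of $W$. For each affine case with $i$ a special vertex, one invokes the standard semidirect-product decomposition $W\cong W_I\ltimes T$, where $T$ is an abelian translation subgroup acting simply transitively on the set of $W_I$-double cosets; the required symmetry is then immediate from the commutativity of $T$.

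Next I would handle the noncommutativity when $|S\setminus I|>1$. The idea is to pick two distinct generators $s,t\in S\setminus I$ and compare the expansions of $T_s^IT_t^I$ and $T_t^IT_s^I$ via Proposition~\ref{prop:c}(ii). The contributing $c_{x,y;z}$ are polynomials in $\{q_r-1\}_{r\in S}$ with nonnegative integer coefficients by Lemma~\ref{lem:cst1}, so exhibiting a single $w\in R_I$ whose coefficient has a nonzero constant term in one of the two products but vanishes in the other forces noncommutativity. By Corollary~\ref{cor:cst1} the same conclusion holds for every specialisation $\scH^I_{\vect{\tau}}$ with $\tau_r\geq 1$. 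This is exactly the short, uniform argument that Proposition~\ref{prop:test} is designed to automate, and it disposes of the $|S\setminus I|>1$ case in one stroke.

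The bulk of the proof, and the main obstacle, is the case $I=S\setminus\{i\}$ with $(W,S,i)$ outside the stated list. Here I would first carry out a diagram-combinatorics reduction: if the Coxeter diagram of $(W,S)$ contains, in a way compatible with the distinguished vertex $i$, a subdiagram whose associated parabolic Hecke algebra is already known to be noncommutative, then noncommutativity lifts to the ambient algebra; conversely, long "tails" far from $i$ can be truncated without affecting the outcome of the test. Iterating these reductions across the spherical, affine, crystallographic-infinite, and non-crystallographic-infinite families leaves finitely many explicit configurations. For each remaining configuration I would exhibit a specific word in $W$ (as tabulated in the appendix) and feed it into Proposition~\ref{prop:test}, with Corollary~\ref{cor:cst1} again transferring the conclusion to every specialisation with $\tau_r\geq 1$. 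The delicate point is the diagram-combinatorial reduction itself, since it must be uniform across all Coxeter types and sharp enough that the residual list is finite and tractable; this is where most of the technical effort is concentrated.
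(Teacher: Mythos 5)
Your overall strategy is the paper's: Lemma~\ref{lem:commutativity} for the commutative cases, explicit words fed into Corollary~\ref{cor:comcondition} or Proposition~\ref{prop:test} for the noncommutative ones, and diagram combinatorics to reduce to a finite list. But two of your steps have genuine gaps. First, for the affine case with $i$ special you claim the required symmetry is ``immediate from the commutativity of $T$.'' It is not: the inverse of the double coset $W_0t_{\lambda}W_0$ is $W_0t_{-\lambda}W_0=W_0t_{\lambda^*}W_0$ with $\lambda^*=-w_0\lambda$, which in general is a \emph{different} double coset, so the minimal representatives $m_{\lambda}$ are not involutions and $c_{u,v;w}^I=c_{v,u;w}^I$ does not follow from (\ref{eq:commobservation}) alone. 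What makes Lemma~\ref{lem:commutativity} applicable is that $\lambda\mapsto-w_0\lambda$ is induced by a diagram automorphism $\pi$ fixing the special vertex and satisfying $\pi(m_{\lambda})=m_{\lambda}^{-1}$; identifying $\pi$ as opposition in the spherical residue is the actual content of this step. (Also, $T$ does not act simply transitively on the double cosets --- they are indexed by the dominant coweights in $Q$, i.e.\ by $Q/W_0$.)

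Second, your reduction of the infinite non-affine case to a finite list cannot work with subdiagram truncation alone: for a fixed diagram shape there are infinitely many Coxeter systems obtained by increasing the edge labels $m_{st}$, and none of them contains the smaller-label system as a parabolic subgroup. The missing ingredient is a monotonicity lemma (Lemma~\ref{lem:increasebond} in the paper): a word witnessing noncommutativity via Corollary~\ref{cor:comcondition} for Coxeter data $(m_{st})$ still witnesses it for any data $(\overline{m}_{st})$ with $\overline{m}_{st}\geq m_{st}$, since the relevant expressions stay reduced and any factorisation $w=v'z'u'$ upstairs would descend to one downstairs. Without this the non-crystallographic cases and the valency/bond bounds on the node $i$ are unreachable. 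Finally, a smaller point: in the $|S\backslash I|>1$ case you must actually name the witness --- $w=ss_1\cdots s_nt$ along a minimal path from $s$ to $t$ in the connected Coxeter graph, which lies in $R_I$ and has a unique reduced expression with the $t$ to the right of the $s$ --- and note that Proposition~\ref{prop:test} is stated only for $I=S\backslash\{i\}$, so Corollary~\ref{cor:comcondition} is the tool here, not Proposition~\ref{prop:test}.
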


As a consequence of this classification it turns out that we have the following uniform statement which has the same flavour as  \cite[Theorem~3.1]{CIK}. The idea is modeled on \cite[Theorem~5.21 and Theorem~5.24]{P1}. 

\begin{thm}\label{thm:commutativity} With the notation of Theorem~\ref{thm:list}, the algebras $\scH^I$ and $\scH^I_{\vect{\tau}}$ are commutative if and only if there is an automorphism $\pi$ of the Coxeter diagram such that
\begin{enumerate}
\item[(a)] $\pi(I)=I$, 
\item[(b)] $\pi(w)=w^{-1}$ for all $w\in R_I$, and
\item[(c)] $q_{\pi(s)}=q_s$ for all $s\in S$.
\end{enumerate}
\end{thm}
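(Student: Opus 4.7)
The plan is to prove the two implications by quite different techniques: a clean algebraic construction for the ``if'' direction, and a case-by-case appeal to the classification of Theorem~\ref{thm:list} for the ``only if'' direction.

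For the ``if'' direction, suppose $\pi$ satisfies (a), (b), (c). Condition (c), together with the fact that a Coxeter diagram automorphism preserves the defining relations~(\ref{eq:rel}), guarantees that the assignment $T_w\mapsto T_{\pi(w)}$ extends to an $\cR$-algebra automorphism $\alpha\colon\scH\to\scH$. The map $*\colon T_w\mapsto T_{w^{-1}}$ is the standard $\cR$-algebra anti-involution of $\scH$ (well-defined since $\ell(w^{-1})=\ell(w)$). The composition $\sigma := {*}\circ\alpha = \alpha\circ{*}$ is an $\cR$-algebra anti-automorphism of $\scH$ with $\sigma(T_w)=T_{\pi(w^{-1})}$. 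Condition (a) forces $\sigma(\mathbf{1}_I)=\mathbf{1}_I$, so $\sigma$ restricts to an anti-automorphism of $\scH^I$. Using the expansion $T_w^I=W_I(q)\sum_{z\in W_IwW_I}T_z$ from Proposition~\ref{prop:c}(i), condition (c) (which makes $\sigma$ fix the coefficient $W_I(q)\in\cR$), and condition (b) (which gives $\pi(w^{-1})=w$ and hence $\pi(W_IwW_I)=W_Iw^{-1}W_I$, so $\sigma$ permutes $W_IwW_I$ setwise for $w\in R_I$), I would verify $\sigma(T_w^I)=T_w^I$ for all $w\in R_I$. Since $\sigma$ is an anti-automorphism acting as the identity on a basis of $\scH^I$, we obtain $ab=\sigma(ab)=\sigma(b)\sigma(a)=ba$ for all $a,b\in\scH^I$.

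For the ``only if'' direction, I would appeal to Theorem~\ref{thm:list}. Commutativity forces $|S\setminus I|=1$, say $I=S\setminus\{i\}$, and $(W,S,I)$ to be one of the enumerated cases. In each spherical case I would take $\pi=\mathrm{id}$: condition (b) then asks that every $w\in R_I$ is an involution, which is exactly the classical criterion of \cite[Theorem~3.1]{CIK}. For the affine cases with $i$ a special vertex, $\pi=\mathrm{id}$ does not always suffice (for example, in type $\tilde{A}_2$ with $i=0$ the translations by non-symmetric dominant coweights lie in $R_I$ but are not involutions), so I would instead take $\pi$ to be the diagram automorphism of $\tilde{X}_n$ that fixes $i$ and restricts on the finite sub-diagram~$I$ to the opposition involution of the spherical system $W_I$. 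Such a $\pi$ exists precisely because $i$ is a special vertex: the opposition on $W_I$ acts on the set of special vertices and can be lifted, fixing $i$, to an automorphism of the affine diagram.

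The main obstacle is verifying condition (b) in the affine cases. Concretely, the elements of $R_I$ are realised as translations $t_\lambda$ by dominant coweights $\lambda$, and one needs to check that the prescribed $\pi$ sends $t_\lambda$ to $t_{-\lambda}=t_\lambda^{-1}$. This reduces to a careful analysis of reduced expressions for translations in the affine Weyl group, using that $w_{0,I}$-conjugation realises the opposition involution on the coroot lattice and that the specialness of~$i$ is exactly the compatibility needed to lift this action to the whole affine diagram. By contrast, in the spherical cases this obstacle dissolves entirely: condition (b) becomes the CIK criterion that $R_I$ consists of involutions.
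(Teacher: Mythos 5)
Your proposal is correct, and its overall architecture coincides with the paper's: the ``only if'' direction is in both cases an appeal to the classification of Theorem~\ref{thm:list}, with $\pi=\mathrm{id}$ in the spherical cases (where (b) reduces to the CIK criterion that $R_I$ consists of involutions) and $\pi$ the opposition-in-the-spherical-residue automorphism in the affine special-vertex cases (the paper's Claim~3 carries out exactly the translation-lattice verification $m_\lambda^{-1}=m_{\lambda^*}$ that you flag as the main obstacle). The one genuine difference is in the ``if'' direction. The paper's Lemma~\ref{lem:commutativity} works entirely at the level of structure constants: it proves $c_{x,y;z}=c_{\pi(x),\pi(y);\pi(z)}$ by induction on $\ell(y)$ and combines this with the identity $c^I_{v^{-1},u^{-1};w^{-1}}=c^I_{u,v;w}$ of equation~(\ref{eq:commobservation}) to get $c^I_{u,v;w}=c^I_{v,u;w}$. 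You instead package the same two symmetries as maps: the algebra automorphism $\alpha\colon T_w\mapsto T_{\pi(w)}$ (well-defined by (c)) and the anti-involution $*\colon T_w\mapsto T_{w^{-1}}$, whose composite $\sigma$ fixes $\mathbf{1}_I$ by (a) and fixes each basis element $T_w^I$ by (b) via $\pi(W_IwW_I)=(W_IwW_I)^{-1}$, forcing $ab=\sigma(ab)=\sigma(b)\sigma(a)=ba$. The two arguments are equivalent in content --- ``$\sigma$ fixes the basis'' is literally the conjunction of the paper's two structure-constant identities --- but your version avoids the inductions on $\ell(y)$ by quoting the standard facts that $\alpha$ and $*$ are (anti)homomorphisms, and is arguably the more transparent formulation. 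One cosmetic point: condition (c) is what makes $\alpha$ an $\cR$-algebra map in the first place; once that is in place, $\sigma$ fixes $W_I(q)$ automatically by $\cR$-linearity rather than as a separate consequence of (c).
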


\begin{remark} Suppose that the Coxeter system $(W,S)$ is not irreducible. Let $S=S_1\cup\cdots \cup S_n$ be the decomposition of the nodes of the Coxeter graph into connected components, and let $W_j=\langle S_j\rangle$ for each $j=1,\ldots,n$. It is elementary that
$$
\scH(W,S)\cong \scH(W_1,S_1)\oplus\cdots\oplus\scH(W_n,S_n).
$$
Let $I\subseteq S$ be spherical, and let $I_j=I\cap S_j$. Then $\mathbf{1}_I=\mathbf{1}_{I_1}\cdots\mathbf{1}_{I_n}$, and it follows that 
$$
\scH^I(W,S)\cong \scH^{I_1}(W_1,S_1)\oplus\cdots\oplus\scH^{I_n}(W_n,S_n).
$$
Thus $\scH^I(W,S)$ is commutative if and only if each $\scH^{I_j}(W_j,S_j)$ is commutative. Thus we will henceforth assume that the $(W,S)$ is irreducible. 
\end{remark}

\begin{remark} 
In the spherical case (except for $H_3$ and $H_4$) commutativity of $X_{n,i}$ is dealt with in \cite[Theorem~3.1]{CIK} (see also \cite[Theorem~10.4.11]{BCN}). We give a different elementary proof here. In fact our proof technique for the general case makes it crucial for us to give our proof of the spherical case.
\end{remark}

\subsection{Initial observations}

By induction on $\ell(y)$ we see that $c_{x,y;z}=c_{y^{-1},x^{-1};z^{-1}}$, and so by Proposition~\ref{prop:c} we see that 
\begin{align}\label{eq:commobservation}
c_{v^{-1},u^{-1};w^{-1}}^I&=W_I(q)\sum_{\substack{x\in W_IvW_I\\
y\in W_IuW_I}}c_{x^{-1},y^{-1};z^{-1}}=W_I(q)\sum_{\substack{
y\in W_IuW_I\\
x\in W_IvW_I}}c_{y,x;z}=c_{u,v;w}^I,
\end{align}
where $z$ is any element of the double coset $W_Iw^{-1}W_I$. Thus if each $w\in R_I$ is an involution then $c_{u,v;w}^I=c_{v,u;w}^I$, and so the algebra $\scH^I$ is commutative. It turns out that in the spherical case this is an equivalence: $\scH^I$ is commutative if and only if each element of $R_I$ is an involution (see \cite[Theorem~3.1]{CIK} and Claim~1 in Section~\ref{section:proof} below). However it is not an equivalence in arbitrary type (as the affine cases with special vertices show).

\begin{lemma}\label{lem:commutativity} Suppose that there is an automorphism $\pi$ of the Coxeter graph satisfying conditions (a), (b) and (c) of Theorem~\ref{thm:commutativity}. Then the algebras $\scH^I$ and $\scH_{\vect{\tau}}^I$ (for any specialisation $\tau_s\in\CC$) are commutative.
\end{lemma}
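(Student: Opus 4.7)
The plan is to construct an $\cR$-linear anti-automorphism $\Phi$ of $\scH$ which restricts to $\scH^I$ and fixes each basis element $T_w^I$ for $w\in R_I$. Since $\Phi$ will be $\cR$-linear and fix a basis, it will be the identity on $\scH^I$; then for any $a,b\in\scH^I$ we will have $ab=\Phi(ab)=\Phi(b)\Phi(a)=ba$, giving commutativity. The same argument applies verbatim to $\scH_{\vect{\tau}}^I$ after specialisation, since condition (c) forces $\tau_{\pi(s)}=\tau_s$ for any admissible choice of $\vect{\tau}$.

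Concretely, I would take $\Phi=\iota\circ\pi_*$, where $\pi_*$ is the algebra automorphism of $\scH$ determined by $T_s\mapsto T_{\pi(s)}$ and $\iota$ is the standard anti-involution $T_w\mapsto T_{w^{-1}}$. Well-definedness of $\pi_*$ requires $\pi$ to preserve both the braid and quadratic relations of $\scH$: the former is automatic for a Coxeter graph automorphism, while the latter, namely $T_s^2=(q_s-1)T_s+q_s$, uses precisely condition (c) to ensure $q_{\pi(s)}=q_s$. A standard induction then yields $\pi_*(T_w)=T_{\pi(w)}$ for every $w\in W$. The map $\iota$ is checked to be an anti-automorphism directly from~(\ref{eq:rel}). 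Combining these gives $\Phi(T_w)=T_{\pi(w)^{-1}}$.

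Next I would show $\Phi$ restricts to $\scH^I$. By (a), $\pi$ permutes $W_I$, so $w\mapsto\pi(w)^{-1}$ is also a bijection of $W_I$, which yields $\Phi(\mathbf{1}_I)=\mathbf{1}_I$, and hence $\Phi(\mathbf{1}_I\scH\mathbf{1}_I)=\scH^I$. For $w\in R_I$, Proposition~\ref{prop:c}(i) gives $T_w^I=W_I(q)\sum_{z\in W_IwW_I}T_z$, so
\[
\Phi(T_w^I)=W_I(q)\sum_{z\in W_IwW_I}T_{\pi(z)^{-1}}.
\]
By (a) and (b), $\pi(W_IwW_I)=W_I\pi(w)W_I=W_Iw^{-1}W_I$, and since $(W_Iw^{-1}W_I)^{-1}=W_IwW_I$, the map $z\mapsto\pi(z)^{-1}$ permutes $W_IwW_I$. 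Therefore $\Phi(T_w^I)=T_w^I$, as required.

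The main subtlety I expect is this last step: although $\pi$ alone sends the double coset $W_IwW_I$ to the generally different coset $W_Iw^{-1}W_I$, the subsequent inversion supplied by $\iota$ brings it back, so the two operations conspire to preserve each double coset setwise and hence fix each $T_w^I$. The remaining verifications that $\pi_*$ and $\iota$ extend to (anti-)automorphisms of $\scH$ are routine.
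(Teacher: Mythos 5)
Your proof is correct and is essentially the paper's argument in different packaging: the paper proves the structure-constant identities $c_{x,y;z}=c_{\pi(x),\pi(y);\pi(z)}$ (which is precisely the statement that your $\pi_*$ is an algebra automorphism, proved by the same induction you would need) and $c_{x,y;z}=c_{y^{-1},x^{-1};z^{-1}}$ (precisely that your $\iota$ is an anti-automorphism), and then combines them on the double-coset basis exactly as you do, using $\pi(W_IwW_I)=W_Iw^{-1}W_I=(W_IwW_I)^{-1}$. Your formulation of the composite $\Phi=\iota\circ\pi_*$ as an anti-automorphism fixing each $T_w^I$, hence acting as the identity on $\scH^I$, is a clean conceptual restatement of the paper's conclusion $c^I_{u,v;w}=c^I_{v,u;w}$.
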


\begin{proof} We claim that the property $q_{\pi(s)}=q_s$ implies that
\begin{align}\label{eq:pi}
c_{x,y;z}=c_{\pi(x),\pi(y);\pi(z)}\qquad\textrm{for all $x,y,z\in W$}.
\end{align}
We argue by induction on $\ell(y)$, with $\ell(y)=0$ trivial. If $\ell(sy)>\ell(y)$, then expanding $T_xT_{sy}=(T_xT_s)T_y$ in two ways using (\ref{eq:rel}) gives
$$
c_{x,sy;z}=\begin{cases} c_{xs,y;z}&\textrm{if $\ell(xs)>\ell(x)$}\\
q_sc_{xs,y;z}+(q_s-1)c_{x,y;z}&\textrm{if $\ell(xs)<\ell(x)$}.
\end{cases}
$$
By the induction hypothesis and property (c) we have
$
c_{x,sy;z}=c_{\pi(x),\pi(sy);\pi(z)},
$
hence (\ref{eq:pi}).

By properties (a) and (b) if $w$ is $I$-reduced then
$
\pi(W_IwW_I)=W_Iw^{-1}W_I=(W_IwW_I)^{-1}.
$
Using this observation, by Proposition~\ref{prop:c} and (\ref{eq:pi}) we have $
c_{\pi(u),\pi(v);\pi(w)}^I=c_{u,v;w}^I$.

On the other hand, by (b) and (\ref{eq:commobservation}) we have
$c_{\pi(u),\pi(v);\pi(w)}^{I}=c_{u^{-1},v^{-1};w^{-1}}^I=c_{v,u;w}^I$.
Thus  $c_{u,v;w}^I=c_{v,u;w}^I$. So $\scH^I$ is commutative, and hence $\scH_{\vect{\tau}}^I$ is commutative for each specialisation.
\end{proof}

\begin{lemma}\label{lem:comcondition} Let $u,v,w\in R_I$. If $c_{u,v;w}^I\neq 0$ then $w\in W_Iu'W_IvW_I$ for some $u'\leq u$.
\end{lemma}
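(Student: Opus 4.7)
The plan is to unwind the statement using Proposition~\ref{prop:c}(ii) and then track the support of the product $T_xT_y$ in the Hecke algebra.

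First, Proposition~\ref{prop:c}(ii) gives, for any chosen $z\in W_IwW_I$,
\[
c_{u,v;w}^I=W_I(q)\sum_{\substack{x\in W_IuW_I\\ y\in W_IvW_I}}c_{x,y;z}.
\]
By Lemma~\ref{lem:cst1} each $c_{x,y;z}$ is a polynomial in $\{q_s-1\mid s\in S\}$ with nonnegative integer coefficients, so no cancellation is possible. Hence $c_{u,v;w}^I\neq 0$ produces $x\in W_IuW_I$ and $y\in W_IvW_I$ with $c_{x,y;z}\neq 0$.

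The key support claim is: if $c_{x,y;z}\neq 0$, then $z=x'y$ for some $x'\leq x$. To see this, fix a reduced expression $x=s_1\cdots s_m$ and expand $T_xT_y=T_{s_1}\cdots T_{s_m}T_y$ from the right, using the left-multiplication formula $T_sT_w=T_{sw}$ or $q_sT_{sw}+(q_s-1)T_w$ (recalled in the proof of Lemma~\ref{lem:magic}). At each step multiplication by $T_{s_i}$ sends the support into $\{1,s_i\}$ times the previous support, so inductively every element in the support of $T_xT_y$ has the form $(s_{i_1}\cdots s_{i_k})y$ for some $i_1<\cdots<i_k$. The subword characterisation of the Bruhat order then gives $s_{i_1}\cdots s_{i_k}\leq x$, establishing the claim.

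To finish, since $u\in R_I$ and $x\in W_IuW_I$, part~(b) in the proof of Proposition~\ref{prop:c} allows us to write $x=aub$ with $a,b\in W_I$ and $\ell(x)=\ell(a)+\ell(u)+\ell(b)$. Concatenating reduced expressions for $a$, $u$, $b$ yields a reduced expression for $x$, so the subword $x'\leq x$ factors as $x'=a'u'b'$ where $a'\leq a$, $u'\leq u$, $b'\leq b$ come from the restriction of the chosen subexpression to the three blocks. Subwords of reduced expressions for elements of $W_I$ consist of letters in $I$ and hence remain in $W_I$, so $a',b'\in W_I$ and $x'\in W_Iu'W_I$. Thus $z=x'y\in W_Iu'W_I\cdot W_IvW_I=W_Iu'W_IvW_I$, and because this set is a union of $(W_I,W_I)$ double cosets, $z\in W_IwW_I$ forces $w\in W_Iu'W_IvW_I$, as required.

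The only genuine obstacle is the support claim of the third paragraph; the rest is bookkeeping that uses Lemma~\ref{lem:cst1} and the standard factorisation of elements of $W_IuW_I$ along the $I$-reduced representative $u$.
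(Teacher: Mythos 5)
Your proof is correct. It shares its engine with the paper's argument --- the support observation that $T_xT_y$ is a linear combination of $T_{x'y}$ with $x'\leq x$, proved by peeling off $T_{s_1}\cdots T_{s_m}$ from the left via the relations (\ref{eq:rel}) --- but the surrounding bookkeeping is genuinely different. The paper works directly with $\mathbf{1}_IT_u\mathbf{1}_I\cdot\mathbf{1}_IT_v\mathbf{1}_I=W_I(q)\sum_{z\in W_I}\mathbf{1}_IT_uT_zT_v\mathbf{1}_I$, applies the support claim only to $T_uT_{zv}$ with $u$ already $I$-reduced, and concludes from the linear independence of $\{T_w^I\mid w\in R_I\}$; since it only needs a containment of supports, it never has to worry about cancellation. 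You instead start from Proposition~\ref{prop:c}(ii), which forces you to invoke the positivity of Lemma~\ref{lem:cst1} to extract a single nonvanishing $c_{x,y;z}$ from the big sum (a step the paper's route avoids, and one that would be delicate if the structure constants could have mixed signs), and then to run the extra factorisation $x=aub$, $x'=a'u'b'$ to land back in $W_Iu'W_I$. Both of these extra steps are carried out correctly --- in particular your appeal to the length-additive decomposition of $W_IuW_I$ and to the fact that subwords of reduced expressions of elements of $W_I$ stay in $W_I$ is sound --- so the proof stands; it is just slightly longer than necessary, and the paper's device of sandwiching by $\mathbf{1}_I$ and quoting that $\mathbf{1}_IT_x\mathbf{1}_I$ is a nonzero multiple of $\mathbf{1}_IT_{x'}\mathbf{1}_I$ lets one skip both the positivity input and the double-coset factorisation.
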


\begin{proof}
Recall that $T_u^I$ is a scalar times $\mathbf{1}_IT_u\mathbf{1}_I$. Thus $T_u^IT_v^I=\sum c_{u,v;w}^IT_w^I$ is a scalar times
\begin{align}\label{eq:re}
\mathbf{1}_IT_u\mathbf{1}_I\cdot\mathbf{1}_IT_v\mathbf{1}_I=W_I(q)\mathbf{1}_IT_u\mathbf{1}_IT_v\mathbf{1}_I=W_I(q)\sum_{z\in W_I}\mathbf{1}_IT_uT_zT_v\mathbf{1}_I.
\end{align}
Since $v\in R_I$ we have $T_zT_v=T_{zv}$ for each $z\in W_I$. An induction on $\ell(u)$ using (\ref{eq:rel}) shows that $T_{u}T_{zv}$ is a linear combination of terms $T_{u'zv}$ with $u'\leq u$. Therefore the right hand side of (\ref{eq:re}) is a linear combination of terms $\{\mathbf{1}_IT_x\mathbf{1}_I\mid x\in u'W_Iv, u'\leq u\}$. It follows from Lemma~\ref{lem:magic} that for each $x\in W$, $\mathbf{1}_IT_x\mathbf{1}_I$ is a nonzero scalar multiple of $\mathbf{1}_IT_{x'}\mathbf{1}_I$, where $x'$ is the unique $I$-reduced element of $W_IxW_I$ (see the proof of Proposition~\ref{prop:c}). Therefore the right hand side of (\ref{eq:re}) is a linear combination of terms $\mathbf{1}_IT_{x'}\mathbf{1}_I$ with $x'$ being the $I$-reduced element of a double coset of the form $W_Iu'W_IvW_I$ with $u'\leq u$. The result follows. 
\end{proof}

Thus we obtain the following general test for noncommutativity.

\begin{cor}\label{cor:comcondition} Let $u,v,w\in R_I$. Suppose that $w=uzv$ with $\ell(w)=\ell(u)+\ell(z)+\ell(v)$ and $z\in W_I$. If there does not exist $u',v',z'$ with $u'\leq u$, $v'\leq v$, and $z'\in W_I$ such that $w=v'z'u'$ and $\ell(w)=\ell(v')+\ell(z')+\ell(u')$, then $\scH^I$ and $\scH_{\vect{\tau}}^I$ (with $\tau_s\geq 1$) are noncommutative.
\end{cor}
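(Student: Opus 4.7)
My plan is to exhibit the given $w \in R_I$ as a witness to noncommutativity, by showing that its coefficient in $T_u^I T_v^I$ is nonzero (in fact $\psi(c_{u,v;w}^I) > 0$ for $\tau_s \ge 1$) while its coefficient in $T_v^I T_u^I$ vanishes identically, i.e.\ $c_{v,u;w}^I = 0$.

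For the first claim I would use Proposition~\ref{prop:c}(ii) with $z = w$ to write
$$c_{u,v;w}^I = W_I(q)\sum_{x\in W_IuW_I,\ y\in W_IvW_I} c_{x,y;w},$$
and exhibit the pair $(x,y) = (uz, v)$, which lies in $W_IuW_I \times W_IvW_I$ since $uz\in uW_I$. The length hypothesis $\ell(w) = \ell(u)+\ell(z)+\ell(v)$ together with $\ell(uz)\le\ell(u)+\ell(z)$ and $\ell((uz)v)\le\ell(uz)+\ell(v)$ forces both to be equalities, so $T_u T_z = T_{uz}$ and $T_{uz} T_v = T_w$ by the defining relations, giving $c_{uz,v;w} = 1$. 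By Lemma~\ref{lem:cst1} every other $c_{x,y;w}$ in the sum is a polynomial in the $q_s - 1$ with nonnegative integer coefficients, so the sum has constant term (in the $q_s-1$) at least $1$; since $W_I(q)$ evaluates positively at $\tau_s \ge 1$, Corollary~\ref{cor:cst1} gives $\psi(c_{u,v;w}^I) > 0$.

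For the second claim I would argue by contradiction. If $c_{v,u;w}^I \ne 0$, Lemma~\ref{lem:comcondition} (applied with the roles of $u$ and $v$ swapped) yields some $v'\le v$ with $w\in W_I v' W_I u W_I$, i.e.\ $w = a v' b u c$ for $a, b, c \in W_I$. Now I would invoke the standard multi-factor consequence of the deletion condition: whenever $w = g_1 g_2\cdots g_k$ in $W$, there exist $g_i'\le g_i$ (Bruhat) with $w = g_1' g_2'\cdots g_k'$ and $\ell(w) = \sum_i \ell(g_i')$. (Concatenate reduced expressions for the $g_i$, iteratively delete pairs via the deletion condition; each deletion respects the block structure, and in the final reduced word every block is itself reduced.) Applied to the five-factor product $w = a \cdot v' \cdot b \cdot u \cdot c$, this produces $a' \le a$, $v''\le v'\le v$, $b'\le b$, $u'\le u$, $c'\le c$ with $w = a'v''b'u'c'$ length-additive and $a', b', c' \in W_I$. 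But $w \in R_I$ is both left and right $I$-reduced: if $a' \ne e$ then $a'\in W_I$ has a left descent $s\in I$ (since every nonidentity element of $W_I$ has a left descent, which must lie in $I$), and $\ell(sw) \le \ell(sa') + \ell(v'') + \ell(b') + \ell(u') + \ell(c') = \ell(w) - 1$ contradicts left $I$-reducedness; so $a' = e$, and symmetrically $c' = e$. The resulting factorization $w = v'' b' u'$ is then length-additive with $v''\le v$, $u'\le u$, $b'\in W_I$, in direct contradiction with the Corollary's hypothesis. Hence $c_{v,u;w}^I = 0$.

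The main obstacle is the multi-factor subword step. Lemma~\ref{lem:comcondition} alone only places $w$ inside the double-coset product $W_I v' W_I u W_I$; extracting a genuine length-additive factorization with the stated bounds $v''\le v$ and $u'\le u$ requires the iterated deletion argument, together with the $I$-reducedness of $w$ to strip away the outer $W_I$-factors $a'$ and $c'$. Once that is in hand, everything else is routine bookkeeping with Proposition~\ref{prop:c} and the nonnegativity in Lemma~\ref{lem:cst1}.
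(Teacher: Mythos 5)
Your proposal is correct and follows essentially the same route as the paper: the positivity of $c_{u,v;w}^I$ via the length-additive pair together with Lemma~\ref{lem:cst1} and Corollary~\ref{cor:cst1}, and the vanishing of $c_{v,u;w}^I$ via Lemma~\ref{lem:comcondition} followed by iterated deletion and $I$-reducedness of $w$ to strip the outer $W_I$-factors. Your descent-based justification for $a'=e$ and $c'=e$ is a slightly more explicit rendering of the paper's remark that reduced expressions of $I$-reduced elements begin and end with letters from $S\backslash I$, but the argument is the same.
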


\begin{proof}
Let $\psi:\ZZ[q_s]_{s\in S}\to\CC$ be the evaluation homomorphism with $\psi(q_s)=\tau_s\geq 1$ for each $s\in S$.
We claim that if $w=uzv$ with $z\in W_I$ and $\ell(w)=\ell(u)+\ell(z)+\ell(v)$ then $c_{u,v;w}^I\neq 0$ and $\psi(c_{u,v;w}^I)>0$. To see this, note that by Proposition~\ref{prop:c} and the defining relations (\ref{eq:rel}) we have
\begin{align*}
c_{u,v;uzv}^I&=W_I(q)\big(c_{uz,v;uzv}+\textrm{positive linear combination of other $c_{x,y;z}$ terms}\big)\\
&=W_I(q)\big(1+\textrm{positive linear combination of other $c_{x,y;z}$ terms}\big),
\end{align*}
from which the result follows (see Lemma~\ref{lem:cst1} and Corollary~\ref{cor:cst1}).

Next we claim that under the assumptions of the Corollary we have $c_{v,u;w}^I=0$ (and hence $\psi(c_{u,v;w}^I)=0$ too). For if $c_{v,u;w}^I\neq 0$ then by Lemma~\ref{lem:comcondition} we have $w\in W_Iv'W_IuW_I$ for some $v'\leq v$, and so $w=w_1v'w_2uw_3$ with $w_1,w_2,w_3\in W_I$. By repeated applications of the deletion condition we obtain a reduced word $w=w_1'v''w_2'u'w_3'$ with $w_1',w_2',w_3'\in W_I$ and $v''\leq v$ and $u'\leq u$. But every reduced expression for an $I$-reduced word starts and ends with elements from $S\backslash I$. Thus $w_1'=w_3'=1$, and so $w=v''w_2'u'$ with $\ell(w)=\ell(v'')+\ell(w_2')+\ell(u')$, contradicting the hypothesis of the corollary.
\end{proof}

The following more specific test for noncommutativity will be used frequently.

\begin{prop}\label{prop:test} Let $I=S\backslash\{i\}$. Suppose that there is an element $w\in R_{I}$ such that $w=uw_Ii$ with $u\in R_I$, $w_I\in W_I$, and $\ell(w)=\ell(u)+\ell(w_I)+1$. Fix reduced expressions for $u$ and $w_I$, and suppose that:
\begin{enumerate}
\item[$(1)$] the induced decomposition $w=uw_Ii$ has the minimal number of $i$ factors amongst all possible reduced expressions for $w$, and
\item[$(2)$] there is a generator $k\in I$ that appears in $w_I$ but not in $u$, and that in every reduced expression for $w$ with the minimal number of $i$ factors no occurrence of this $k$ generator appears between the first two $i$ generators of the expression.
\end{enumerate}
Then $\scH^I$ and $\scH_{\vect{\tau}}^I$ (with $\tau_s\geq 1$) are noncommutative.
\end{prop}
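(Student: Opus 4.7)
The plan is to invoke Corollary~\ref{cor:comcondition} with the given $u$, with $v=i$, and with $z=w_I$. The hypothesis of the proposition supplies the decomposition $w=uzv=uw_Ii$ with $\ell(w)=\ell(u)+\ell(w_I)+1$, so what remains is to rule out every alternative decomposition $w=v'z'u'$ satisfying $v'\leq i$, $z'\in W_I$, $u'\leq u$, and $\ell(w)=\ell(v')+\ell(z')+\ell(u')$. Since the only elements $v'\in W$ with $v'\leq i$ in the Bruhat order are $v'=1$ and $v'=i$, there are exactly two subcases to handle.

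The case $v'=1$ is immediate. Then $w=z'u'$ is reduced, and because $w\in R_I$ with $S\setminus I=\{i\}$, every reduced expression for $w$ must begin with $i$; so if $z'\neq 1$ its first letter lies in $I$, a contradiction, and if $z'=1$ then $w=u'\leq u$, which is incompatible with $\ell(w)=\ell(u)+\ell(w_I)+1>\ell(u)\geq\ell(u')$.

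The substantive case is $v'=i$, which yields a reduced expression $w=iz'u'$. Here I would argue via supports. For each $x\in W$ the set $\mathrm{supp}(x)\subseteq S$ of simple reflections occurring in a reduced expression of $x$ is a well-defined invariant of $x$ (braid moves never delete a generator from the multiset, so no reduced expression can omit one that appears in another), and it is monotone with respect to the Bruhat order, so $u'\leq u$ gives $\mathrm{supp}(u')\subseteq\mathrm{supp}(u)$. From $w=iz'u'$ reduced we have $\mathrm{supp}(w)=\{i\}\cup\mathrm{supp}(z')\cup\mathrm{supp}(u')$; since $k\in\mathrm{supp}(w_I)\subseteq\mathrm{supp}(w)$ while $k\neq i$ and $k\notin\mathrm{supp}(u)\supseteq\mathrm{supp}(u')$, we must have $k\in\mathrm{supp}(z')$, i.e.\ $k$ appears in the reduced expression for $z'$.

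To invoke condition~(2) we need a reduced expression for $w$ that realises the minimum number of $i$'s. Writing $n_i$ for the $i$-count of a chosen reduced expression and $m_i$ for the minimum over all reduced expressions, hypothesis~(1) reads $m_i(w)=n_i(u)+1$. Substituting a min-$i$ reduced expression for $u'$ into $w=iz'u'$ gives $m_i(w)\leq 1+m_i(u')$, whence $m_i(u')\geq n_i(u)$; conversely, $u'\leq u$ means that some subexpression of the fixed reduced expression of $u$ is a reduced expression for $u'$, and this subexpression contains at most $n_i(u)$ copies of $i$, so $m_i(u')\leq n_i(u)$. Therefore $m_i(u')=n_i(u)$, and choosing the reduced expression for $u'$ accordingly converts $w=iz'u'$ into a min-$i$ reduced expression for $w$. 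Now the first $i$ is the initial letter and the next $i$ lies inside $u'$ (as $z'\in W_I$ contains no $i$), so the region strictly between the first two $i$'s contains the entire reduced expression for $z'$; since $k\in\mathrm{supp}(z')$, an occurrence of $k$ lies in that region, contradicting~(2). The main delicate point is precisely this bookkeeping with $i$-counts: when $i$ is conjugate in $W$ to another simple reflection, the number of $i$'s in a reduced expression is not a word invariant, and hypothesis~(1) is genuinely needed to force the required minimum to be realised by $w=iz'u'$.
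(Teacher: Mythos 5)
Your proposal is correct and follows essentially the same route as the paper's proof: reduce to Corollary~\ref{cor:comcondition} with $v=i$ and $z=w_I$, use hypothesis (1) to force $i'=i$ and to pin down the $i$-count of $u'$, observe via supports that $k$ must occur in $z'$, and then contradict hypothesis (2) because $z'$ sits between the first two $i$'s. Your write-up is in fact somewhat more careful than the paper's (which asserts without comment that $u'$ has the same number of $i$ factors as $u$ and starts and ends with $i$), but the underlying argument is identical.
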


\begin{proof} By Corollary~\ref{cor:comcondition} it is sufficient to show that $w$ cannot be written as $w=i'z'u'$ with $i'\in\{\mathrm{id},i\}$, $u'\leq u$, $z'\in W_I$, and $\ell(w)=\ell(i')+\ell(z')+\ell(u')$. Suppose we have such an expression. By (1) we see that $i'=i$, and that $u'$ has the same number of $i$ factors as $u$ does. In particular, $u'$ starts and ends with an $i$. Since $u'$ contains no $k$ factors we see that $z'$ must contain some $k$ factors. Then these factors are between the first two $i$ generators, contradicting~(2).
\end{proof}

\section{Proof of Theorem~\ref{thm:list}}\label{section:proof}

We use the following notation. If $X_n$ is a spherical Coxeter type with nodes $1,2,\ldots,n$ then $X_n^{i}$ is the Coxeter graph obtained by attaching a new node (labelled $0$) to the $i$ node of $X_n$ by a single bond. Similarly, $X_n^{ij}$ with $i\neq j$ indicates that this new node is connected to $i$ and $j$ by single bonds, and $X_n^{ii}$ indicates that $0$ is joined to $i$ by a double bond. This notation naturally extends, and, for example, $F_4^{1,1}\times E_7^{2,5,6}$ indicates that a new node $0$ is connected to the $1$ node of an $F_4$ diagram by a double bond, and to the $2$, $5$ and $6$ nodes of an $E_7$ diagram by single bonds. Also, recall the notation $X_{n,i}$ and $\tilde{X}_{n,i}$ from the beginning of Section~\ref{sect:statement}.

Recall that we assume throughout that $(W,S)$ is irreducible. The proof of Theorem~\ref{thm:list} is achieved via the following 6 claims. The first claim shows that if $|S\backslash I|>1$ then $\scH^I$ is noncommutative, allowing us to focus on the maximal parabolic case $I=S\backslash\{i\}$. The second and third claims deal with the commutative spherical and affine cases. In claim 4 we produce a list of noncommutative cases. This library of noncommutative cases is used in claims 5 and 6 to show that all cases other than those listed in Theorem~\ref{thm:list} are noncommutative.

\bigskip

\noindent\textbf{Claim 1:} \textit{If $|S\backslash I|>1$ then $\scH^I$ and $\scH_{\vect{\tau}}^I$ (with $\tau_s\geq 1$) are noncommutative.}

\begin{proof} Choose vertices $s,t\in S\backslash I$ with $s\neq t$ at minimal length in the (connected) Coxeter graph of~$W$. Then $s,t\in R_I$, and if $s,s_1,\cdots,s_n, t$ is a minimal length path in the Coxeter diagram then $s_1,\ldots,s_n\in W_I$. The $I$-reduced element $w=ss_1\cdots s_nt$ 
satisfies $\ell(w)=\ell(s)+\ell(s_1\cdots s_n)+\ell(t)$. But $w$ cannot be written as $w=t'z's'$ with $t'\leq t$, $s'\leq s$, $z'\in W_I$, and $\ell(w)=\ell(t')+\ell(z')+\ell(s')$, for there is exactly one reduced expression for $w$, and this reduced expression has one $s$, and one~$t$, and the $t$ is to the right of the~$s$. Thus by Corollary~\ref{cor:comcondition} the algebra $\scH^I$ (and its specialisations with $\tau_s\geq 1$) is noncommutative. (Compare with~\cite{Anderson}).
\end{proof}

\smallskip

\noindent\textbf{Claim 2:} \textit{The spherical cases listed in Theorem~\ref{thm:list} are commutative.}

\begin{proof} It is well-known that in each case listed the minimal length double coset representatives are involutions (see Proposition~\ref{prop:example} for the $E_{8,1}$ example). Thus Lemma~\ref{lem:commutativity} applies (with $\pi$ being trivial), and so the algebras are commutative.
\end{proof}

\smallskip

\noindent\textbf{Claim 3:} \textit{If $I=S\backslash\{i\}$ with $i$ a special node of an affine diagram then $\scH^I$ is commutative.}

\begin{proof}
Let $(W,S)$ be an irreducible Coxeter system of affine type, and let $I=S\backslash\{i\}$, where $i$ is a special type. Then $\scH^I$ (and hence $\scH_{\vect{\tau}}$ for all specialisations) is commutative by Lemma~\ref{lem:commutativity} with the diagram automorphism $\pi$ from that lemma being opposition in the spherical residue. In more detail: We may assume that~$i=0$. Let $Q$ be the coroot lattice of the associated root system, and let
$P$ be the coweight lattice, with dominant cone~$P^+$. Let $W_0=W_{S\backslash\{0\}}$. Then $W\cong Q\rtimes W_0$, and $\{t_{\lambda}\mid \lambda\in Q\cap P^+\}$ is a set of $W_0\backslash W/W_0$ representatives, where $t_{\lambda}$ is the translation by~$\lambda$. So the double cosets satisfy $(W_0t_{\lambda}W_0)^{-1}=W_0t_{\lambda}^{-1}W_0=W_0t_{-\lambda}W_0=W_0t_{\lambda^*}W_0$, where $\lambda^*=-w_0\lambda$, with $w_0$ being the longest element of $W_0$. It follows that the minimal length element $m_{\lambda}$ of $W_0t_{\lambda}W_0$ satisfies $m_{\lambda}^{-1}=m_{\lambda^*}$. Hence the automorphism $\pi$ of the Coxeter diagram given by $\pi(0)=0$ and $\alpha_{\pi(j)}=-w_0\alpha_j$ for $j=1,\ldots,n$ satisfies $\pi(m_{\lambda})=m_{\lambda}^{-1}$ for all $\lambda\in Q\cap P^+$. By construction we have $\pi(I)=I$, and considering the connected affine diagrams we have $q_{\pi(s)}=q_s$ for all $s\in S$. Thus by Lemma~\ref{lem:commutativity} $\scH^I$ is commutative (and hence $\scH_{\vect{\tau}}^I$ is too).
\end{proof}

\noindent\textbf{Claim 4:}  \textit{All of the cases listed in the tables in the appendix are noncommutative.}

\begin{proof} We say that an element $w\in W$ has an \textit{essentially unique expression} if every reduced expression for $w$ is obtained from a given reduced expression of $w$ by a sequence of `commutations' (that is, Coxeter moves of the form $st=ts$). It is routine to check that all of the words in the tables in the appendix have essentially unique expressions, except for the $H_{4,4}$, $\tilde{F}_{4,4}$, $\tilde{E}_{8,1}$ and $H_4^1$ words. These words will be dealt with below. For those words with essentially unique expressions it is easy to check that the triple $(u,w_I,k)$ provided in the table satisfies the hypothesis of Proposition~\ref{prop:test}, except for the $B_2^{1,2}$, $B_4^3$, $E_8^1$, $H_3^{1,1}$, $I_2(5)^{1,1}$ and $I_2(7)^1$ words, and so the associated algebras are noncommutative. 
For example, consider the $D_5^3$ word $w=uw_I0$ with $u=03243120$, $w_I=3543$ and $k=5$. To see that there are no $131\mapsto313$ Coxeter moves available one considers each triple $(1,3,1)$ in the given reduced decomposition for $w$ and verifies that there is no sequence of commutations that make these three generators adjacent. One such triple is $w=03\,\underline{1}\,24\,\underline{3}\,\underline{1}\,2035430$, and it is clear that it is impossible to make the first $\underline{1}$ adjacent to the $\underline{3}$ using commutations. Continuing in this fashion one verifies that this word has an essentially unique expression. It is now clear that the word is reduced and $I$-reduced, and that \textit{every} reduced expression for $w$ has the property that the $k=5$ generator does not appear between the first two $0$ generators. Thus Proposition~\ref{prop:test} applies, and so the algebra is noncommutative.

It remains to deal with the $H_{4,4}$, $\tilde{F}_{4,4}$, $\tilde{E}_{8,1}$, $H_4^1$, $B_2^{1,2}$, $B_4^3$, $H_3^{1,1}$, $I_2(5)^{1,1}$ and $I_2(7)^1$ words (these are marked with a $*$ in the appendix). The $H_{4,4}$ word $w=uw_I4$ with $u=434323434$ and $w_I=123$ has only one possible Coxeter move ($323\mapsto 232$). The only Coxeter move available in the resulting expression $w=4342324341234$ is the move $232\mapsto 323$ taking us back to the original expression. Therefore \textit{every} reduced expression for $w$ is obtained from one of
\begin{align*}
&4343234341234\\
&4342324341234
\end{align*}
by using only commutations. Hence it is clear that the $k=1$ generator can never appear in between the first two $4$ generators of a reduced expression for $w$, and so Proposition~\ref{prop:test} applies.

The $\tilde{F}_{4,4}$ word $w=uw_I4$ with $u=43231234$ and $w_I=3231230123$ has exactly one possible Coxeter move ($343\mapsto 434$). The only Coxeter move in the resulting expression is the one returning us to the original expression. Thus, as in the $H_{4,4}$ case, we readily see that Proposition~\ref{prop:test} (with $k=0$) applies.

Consider the $\tilde{E}_{8,1}$ word $w=134562453413245\underline{676}8054324567813456724563452431$. The only Coxeter move possible initially is the $676\mapsto 767$ move. After making this move we get $w=13456245341324\underline{5}7\underline{6}780\underline{5}4324567813456724563452431$. The only new Coxeter move available is the $565\mapsto 656$ move, giving $w=1345624534132476567804324567813456724563452431$. There are now no new Coxeter moves, and so \textit{every} reduced expression for $w$ is obtained from one of 
\begin{align*}
&1345624534132456768054324567813456724563452431\\
&1345624534132457678054324567813456724563452431\\
&1345624534132476567804324567813456724563452431
\end{align*}
using commutations alone. Thus it is clear that the $0$ generator can never be between the first two $1$ generators, and so Proposition~\ref{prop:test} applies.

The details for the $H_4^1$ word $w=uw_I0$ with $u=012343210$ and $w_I=43423412324341234321$ are as follows. Arguing as above one sees that every reduced expression for $w$ is obtained from one of the following three expression by commuting generators:
\begin{align*}
&012343210434234123243412343210\\
&012343210434234132343412343210\\
&012343210434234321234342343210.
\end{align*}
It follows that every reduced expression for $w$ has at least three $4$s between the last two $0$ generators. Thus there is no reduced expression $w=0zu'$ with $u'\leq u$ and $z\in W_I$ because such an expression has at \textit{most} one $4$ between the last two $0$s. Thus Corollary~\ref{cor:comcondition} proves noncommutativity.

Consider the $B_2^{1,2}$ word $w=uw_I0$ with $u=01210$ and $w_I=212$. This word has \textit{exactly one} reduced expression, and this expression has exactly two $2$s in between the last two $0$ generators. Hence there is no reduced expression of the form $w=0zu'$ with $z\in W_I$ and $u'\leq u$, for each such expression has at most one $2$ between the last two $0$s. Thus Corollary~\ref{cor:comcondition} proves noncommutativity.

Consider the $B_4^3$ word $w=uw_I0$ with $u=03430$ and $w_I=234123$. It is clear that every reduced expression for $w$ has at least one $2$ in between the last two $0$ generators. Thus there is no reduced expression of the form $w=0zu'$ with $z\in W_I$ and $u'\leq u$ (since such expressions have no $2$s in between the last two $0$ generators) and so Corollary~\ref{cor:comcondition} proves noncommutativity.

Consider the $E_8^1$ word $w=uw_I0$ with $u=0134254310$, $w_I=654234567813425436542765431$. This word has an essentially unique expression, and so it is clear that every reduced expression for $w$ has at least two $2$s in between the last two 0 generators. Hence there is no reduced expression of the form $w=0zu'$ with $z\in W_I$ and $u' \leq u$, for each such expression has either zero or one $2$s between the last two $0$s.

Consider the $H_3^{1,1}$ word $w=uw_I0$ with $u=010$ and $w_I=232132321$. Every reduced expression for $w$ has at least two $2$s in between the last two $0$ generators. Thus there is no reduced expression of the form $w=0zu'$ with $z\in W_I$ and $u'\leq u$, and so Corollary~\ref{cor:comcondition} proves noncommutativity. Similarly, for the $I_2(5)^{1,1}$ word $w=uw_I0$ with $u=010$ and $w_I=2121$ every reduced expression for this word has at least one $2$ in between the last two $0$ generators. So Corollary~\ref{cor:comcondition} proves noncommutativity. Finally, every reduced expression for the $I_2(7)^1$ word $w=uw_I0$ with $u=012120$ and $w_I=12121$ has exactly three $1$s in between the last two $0$ generators, and as above, Corollary~\ref{cor:comcondition} proves noncommutativity.
\end{proof}

\smallskip

\noindent\textbf{Claim 5:} \textit{All spherical and affine cases other than those listed in Theorem~\ref{thm:list} are noncommutative.}

\begin{proof} Claim~4 above has provided us with a library of noncommutative examples. We use this library to deal with the remaining cases via the following obvious fact: If $I\subseteq S$ is spherical, and if $S'$ is such that $I\subseteq S'\subseteq S$, and if the parabolic Hecke algebra $\scH^I(W_{S'},S')$ is noncommutative, then $\scH^I(W,S)$ is noncommutative too (and the same holds for specialisations with $\tau_s\geq 1$). This is clear, since the former algebra is a subalgebra of the latter.

It is now straightforward to show that all remaining spherical and affine cases are noncommutative. For example $E_{7,5}$ is noncommutative since the $5$ node of $E_7$ plays the role of the $5$ node in an $E_6$ residue, and $E_{6,5}$ is noncommutative by our library. Similarly $\tilde{E}_{8,2}$ is noncommutative since the $2$ node of $\tilde{E}_{8}$ plays the role of the $2$ node in an $E_8$ residue, and $E_{8,2}$ is noncommutative.
\end{proof}

\newpage

\noindent\textbf{Claim 6:} \textit{All infinite non-affine cases are noncommutative.}

\begin{proof} The reduction arguments in this proof rely on the following fact. If Proposition~\ref{prop:test} (or Corollary~\ref{cor:comcondition}) has been used to prove noncommutativity for an $I$-parabolic Hecke algebra with Coxeter data $m_{st}$, then the $I$-parabolic Hecke algebras with Coxeter data $\overline{m}_{st}\geq m_{st}$ for all $s,t\in S$ are also noncommutative. This fact is formally stated and proved in Lemma~\ref{lem:increasebond} in the appendix (but it is clear from the way our `word arguments' work).

Suppose that $W$ is neither spherical nor affine. Let $I\subseteq S$ be spherical, and suppose that $\scH^I$ is commutative and not in the tables in the appendix. By Claim~1 we see that $|S\backslash I|=1$, and so by relabelling nodes if necessary we may assume that $I=S\backslash\{0\}$.

We will prove the following reductions based on the neighbourhood of $0$ in the Coxeter graph:

\smallskip

\noindent$\bullet$ The valency of $0$ is at most $2$, and so the diagram $I=S\backslash\{0\}$ has $1$ or $2$ connected components.

\noindent$\bullet$ If $0$ has valency $1$ then the bond number $p$ is either $3$ or $4$.

\noindent$\bullet$ If $0$ has valency $2$ then the bond numbers $p\leq q$ are $(p,q)=(3,3)$ or $(3,4)$.

\smallskip

For the first claim, suppose that $0$ has valency $4$ with bond numbers $3\leq p\leq q\leq r\leq s$. If $(p,q,r,s)=(3,3,3,3)$ then $0$ is noncommutative in a $\tilde{D}_4$ residue, and if the bond numbers are different from $(3,3,3,3)$ then we can use Lemma~\ref{lem:increasebond} to deduce noncommutativity. Thus $0$ has valency at most $3$. Suppose that $0$ has valency $3$ with bond numbers $3\leq p\leq q\leq r$. If there is at least one vertex not connected to $0$ then $0$ is noncommutative in either a $D_5$ residue, or is noncommutative by Lemma~\ref{lem:increasebond} and comparison to a ${D}_5$ residue. Thus if $0$ has valency $3$ then $S$ has exactly $4$ vertices. Suppose that there are nodes $i,j\neq 0$ which are connected. The `minimal' case is $A_1^1\times A_2^{1,2}$ (which is in the table in the appendix), and all other bond number possibilities are noncommutative by Lemma~\ref{lem:increasebond}. So suppose that $S$ has exactly $4$ nodes, and that there are no other bonds other than those which involve the $0$ node. If $(p,q,r)=(3,3,3)$ then we have a $D_4$ diagram (contradicting the assumption that $W$ is neither spherical nor affine). If $(p,q,r)=(3,3,4)$ then the $0$ node is noncommutative in $\tilde{B}_3$, and Lemma~\ref{lem:increasebond} shows that all higher bond numbers also lead to noncommutative algebras. This completes the proof of the first statement. 

To prove the second statement, if $0$ has valency $1$ with bond number at least $5$, then we can compare a suitable residue with either $\tilde{B}_{3,0}$ (if $0$ is not connected to an end vertex), or with $B_2^{1,1,1}$ (if $0$ is connected with an end vertex and there are only three vertices), or with $H_{4,4}$ (if $0$ is connected with an end vertex and there are at least four vertices) to deduce noncommutativity (applying Lemma~\ref{lem:increasebond}).

To prove the third statement, suppose that the valency of $0$ is $2$ with bond numbers $(3,n)$, $n\geq 5$, or $(m,4)$, $m\geq4$. Then we can compare an appropriate residue with $H_{3,2}$ or $\tilde{C}_{2,1}$ to deduce noncommutativity (applying Lemma~\ref{lem:increasebond}).

\medskip

The three bullet points above place severe restrictions on the Coxeter diagram $S=I\cup\{0\}$. We now eliminate each possibility using our noncommutative examples from the library in the appendix. We will give examples of the arguments used.

\smallskip

\textit{Case 1: The valency of $0$ is $1$ with $p=3$}. We consider each possible connected spherical diagram $I=S\backslash\{0\}$ and each possible way of connecting $0$ with a single bond to make~$S$. For example, suppose that $I=B_n$ with $n\geq 2$. The possible diagrams are $B_n^i$ with $i=1,\ldots,n$.
If $n=2$ then $B_2^1$ and $B_2^2$ both give $B_3$ diagrams, a contradiction, so assume that $n\geq 3$. We have $B_n^1=B_{n+1}$ and $B_n^2=\tilde{B}_{n}$ (a contradiction). Each diagram $B_n^i$ with $2<i<n<i+4$ has $0$ as a noncomutative node in a $B_{n-i+3}^3$ (and these are all in our table). If $n\geq i+4$, then we have a $B_7^3$ residue, which is noncommutative by Lemma~\ref{lem:increasebond} and comparison with $E_{8,2}$. In $B_n^n$, $n\geq 4$, the node $0$ is noncommutative in an $\tilde F_4$ residue. Thus $I=B_n$ is excluded.

\textit{Case 2: The valency of $0$ is $1$ with $p=4$}. Again we consider each diagram. For example, suppose that $I=H_3$. The diagram $H_3^{1,1}$ is in our table, and $H_3^{2,2}$ and $H_3^{3,3}$ both have $0$ as a noncommutative node in an $I_2(5)^{1,1}$ residue.

\textit{Case 3: The valency of $0$ is $2$ with $(p,q)=(3,3)$, and $I$ has one connected component}. For example suppose that $I=A_n$ with $n\geq 2$. The possibilities are $A_n^{i,j}$ with $1\leq i<j\leq n$. The case $i=1$ and $j=n$ is excluded, for it gives an $\tilde{A}_n$ diagram. By looking in a residue it suffices to show that the $0$ node is noncommutative in $A_k^{1,k-1}$ for each $k\geq 3$. The diagrams $A_3^{1,2}$ and  $A_4^{1,3}$ are in the table in the appendix. The diagram $A_5^{1,4}$ is excluded by comparing it to an $E_6$ diagram and using Lemma~\ref{lem:increasebond}. Specifically, if we decrease the bond $m_{12}=3$ in $A_5^{1,4}$ to $m_{12}=2$ then we get an $E_6$ diagram with $0$ playing the role of the (noncommutative) $3$ node. The diagram $A_6^{1,5}$ is excluded since it has $0$ as a noncommutative node in an $E_6$ residue, and for $k\geq 7$ the $A_k^{1,k-1}$ diagram is excluded since it has $0$ as the noncommutative $k-3$ node in a $D_k$ residue.

\textit{Case 4: The valency of $0$ is $2$ with $(p,q)=(3,4)$, and $I$ has one connected component}. Suppose that $0$ is connected to $i\in I$ by a single bond, and to $j\in I$ by a double bond (with $i\neq j$). The case where $i$ and $j$ are connected is excluded by Lemma~\ref{lem:increasebond} and the fact that $0$ is noncommutative in $A_2^{1,1,2}$. So suppose that $i$ and $j$ are not connected. Since $I$ is connected, $j$ is connected to some $k\in I$ with $k\neq i$. Then $0$ is noncommutative in an $F_4$ residue (incorporating $i,0,j$ and $k$) or by comparison to an $F_4$ diagram (using Lemma~\ref{lem:increasebond}).

\textit{Case 5: The valency of $0$ is $2$ with $(p,q)=(3,3)$, and $I$ has two connected components}. Let the connected components be $I_1$ and $I_2$. Suppose that $0$ is connected to $i_1\in I_1$ and $i_2\in I_2$. If there are nodes $j_1,k_1\in I_1$ connected to $i_1$ and $j_2,k_2\in I_2$ connected to $i_2$ then $0$ is a noncommutative node in a $\tilde{D}_6$ residue, or can be compared to such a vertex by Lemma~\ref{lem:increasebond}. Therefore either $i_1$ or $i_2$ is an end node.

Suppose that $i_1$ is an end node of $I_1$, and that $i_1$ is connected to $j_1\in I_1$. Assume that there exists neighbours $j_2,k_2\in I_2$ of $i_2$. Then the $0$ node is noncommutative by comparison with a $D_{7,4}$ diagram, using Lemma~\ref{lem:increasebond}. 

Suppose that there exist $j_2,k_2\in I_2$ distinct neighbours of $i_2$, and that $j_2$ has a neighbour $m_2\neq k_2$. Then the $0$ node is noncommutative by comparison with an $E_{6,3}$ diagram.

There are now 2 possibilities remaining: (i) $I_1=\{i_1\}=A_1$ and $I_2$ is a `star' with $0$ connected to the central node, or (ii) $i_1$ is an end node of $I_1$ and $i_2$ is an end node of $I_2$. (By a `star' we mean a central node with other nodes hanging off it. None of these outer nodes are connected to other outer nodes, because the diagram $I_2$ cannot have a triangle since it is spherical). Consider case (i). If any bond number of $I_2$ is $\geq 4$ then we compare with an $A_1^1\times B_3^2$ diagram. So suppose that all bonds in $I_2$ are $3$-bonds. If $I_2$ has at least four vertices then $0$ is the noncommutative in an $A_1^1\times D_4^{2}$ diagram. If $I_2$ has exactly three vertices then we have $D_5$, and if it has exactly $2$ vertices then we have $A_4$. Thus case (i) is excluded.

We are left to consider the case when $i_1$ is an end node of $I_1$ and $i_2$ is an end node of $I_2$. We consider these case by case. For example, suppose that $I_1=A_n$ and $I_2=E_m$ for $m=6,7,8$. By symmetry we can suppose that $0$ is connected to the node $1$ of $A_n$, and $0$ is not connected to the node $6$ of $E_6$. So the possibilities are $A_n^1\times E_m^k$, with $k=1,2,m$. In $A_n^1\times E_m^1$, the $0$ node is noncommutative in an $E_{8,2}$ residue. In $A_n^1\times E_m^2$, the $0$ node is noncommutative in an $E_{7,6}$ residue. Finally, in $A_n^1\times E_m^m$, $m=7,8$, the $0$ node is noncommutative in an $A_1^1\times E_m^m$ residue, which for both values of $m$ is in the table.

suppose that  $I_1=A_n$ and $I_2=E_8$. By symmetry we can suppose that $0$ is connected to the $1$ node of $A_n$, and so the possibilities are $A_n^1\times E_8^k$ with $k=1,2,8$. In $A_n^1\times E_8^1$ the $0$ node is noncommutative in an $E_8$ residue, and in $A_n^1\times E_8^2$ the $0$ node is noncommutative in an $E_7$ residue. In $A_n^1\times E_8^8$ the $0$ node is noncommutative in an $A_1^1\times E_8^8$ residue.

\textit{Case 6: The valency of $0$ is two with $(p,q)=(3,4)$, and $I$ has 2 connected components}.  Let $I_1$ and $I_2$ be the connected components. Suppose that $0$ is connected to $i\in I_1$ by a single bond, and to $j\in I_2$ by a double bond. If $|I_2|>1$ then $0$ is noncommutative in an $F_{4,2}$ residue (or can be compared to such a vertex using Lemma~\ref{lem:increasebond}). Thus $I_2=\{j\}$. If $i$ is not an end node of $I_1$ then $0$ is noncommutative in a $\tilde{B}_{4,2}$ residue (or can be compared to such a vertex). Thus $I_2=\{j\}$ and $i$ is an end node of $I_1$. So we need to consider each diagram $A_1^{1,1}\times X_n^k$ for each spherical type $X_n$ and end vertex $k$ of~$X_n$.

If $X_n$ contains a bond with bond number $\geq 4$, then $0$ is noncommutative in a $\tilde C_k$ residue (for appropriate $k$). If $X_n$ contains a vertex with degree $\geq 3$, then $0$ is
noncommutative in a $\tilde B_k$ residue (for appropriate $k$). Hence $X_n=A_n$ and we get a $B_{n+2}$ diagram.

\smallskip

Thus all infinite non-affine cases are noncommutative, and the proof of Theorem~\ref{thm:list} is complete. 
\end{proof}

Theorem~\ref{thm:commutativity} is now also clear: The `if' part is Lemma~\ref{lem:commutativity}, and the `only if' part is because, as we have seen, there are no other commutative cases other than the listed spherical cases (in which case $\pi=\mathrm{id}$) and the listed affine cases (in which case $\pi$ is opposition in the spherical residue).

\begin{appendix}

\section{Appendix}\label{app:1}

The appendix has $3$ sections. The first gives the reduction lemma that allowed us to compare diagrams to diagrams with lower bond numbers in the proof of Theorem~\ref{thm:list} (Claim 6). The second section illustrates a technique that can be used to determine if the minimal length double coset representatives of a spherical Coxeter group are involutions (this was used in Claim 2 of the proof of Theorem~\ref{thm:list}). The third section gives the tables of words that were used in the text to prove noncommutativity.

\subsection{The reduction lemma}

\begin{lemma}\label{lem:increasebond}
Let $(W,S)$ be a Coxeter system with Coxeter matrix $M=(m_{st})$. Let $I\subseteq S$. Suppose there exists $w,u,v,z\in W$ such that $w=uzv$, $u,w\in R_I$, $z\in W_I$, and $\ell(w)=\ell(u)+\ell(z)+\ell(v)$, and that there exists no $u',v',z'\in W$ with $w=v'z'u'$, $u'\leq u$, $v'\leq v$, $z'\in W_I$, and $\ell(w)=\ell(v')+\ell(z')+\ell(u')$.

Let $(\overline{W},S)$ be a Coxeter system with Coxeter matrix $\overline{M}=(\overline{m}_{st})$. Suppose that $I\subseteq S$ is spherical (for $\overline{W}$), and let $\overline{\scH}^I=\scH^I(\overline{W},S)$ be the associated $I$-parabolic Hecke algebra. If $\overline{m}_{st}\geq m_{st}$ for all $s,t\in S$ then $\overline{\scH}^I$ is noncommutative.
\end{lemma}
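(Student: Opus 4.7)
The plan is to verify the hypothesis of Corollary~\ref{cor:comcondition} in the bigger system $(\overline{W},S)$ for suitable lifts of $w,u,v,z$, and then conclude from that corollary that $\overline{\scH}^I$ is noncommutative. The essential combinatorial input will be the following sharpening of the classical fact that reducedness is preserved under bond increase: for any word $\omega_0$ in $S$ that is a reduced expression for an element $w_0\in W$, the word $\omega_0$ is reduced in $\overline{W}$, and moreover every word $\omega$ in $S$ which is reduced in $\overline{W}$ and represents the same element of $\overline{W}$ as $\omega_0$ is itself reduced in $W$ and represents $w_0$ there.

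The proof of this key claim will rest on one elementary observation: no reduced-in-$W$ word can contain a consecutive alternating $s,t$-subword of length strictly greater than $m_{st}$, since such a block can be strictly shortened by applying the $W$-braid move $\underbrace{st\cdots}_{m_{st}}\leftrightarrow\underbrace{ts\cdots}_{m_{st}}$ inside the block and then cancelling a resulting pair of adjacent equal letters. Consequently the only $\overline{W}$-braid move $\underbrace{st\cdots}_{\overline{m}_{st}}\leftrightarrow\underbrace{ts\cdots}_{\overline{m}_{st}}$ that is ever applicable to a reduced-in-$W$ word has $\overline{m}_{st}=m_{st}$, i.e.\ is itself a $W$-braid move. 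Invoking Matsumoto's theorem in $\overline{W}$ to connect $\omega_0$ and $\omega$ by a chain of $\overline{W}$-braid moves through reduced-in-$\overline{W}$ intermediates, one argues inductively along the chain that every move is in fact a $W$-braid move and every intermediate word remains reduced in $W$ with the same $W$-image as $\omega_0$; an analogous argument using Tits' solution of the word problem gives the first assertion.

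With the key claim in hand, fix reduced expressions $\omega_u,\omega_z,\omega_v$ in $W$ for $u,z,v$ and set $\omega:=\omega_u\omega_z\omega_v$, which is reduced in $W$ for $w$ by length-additivity. Let $\bar{u},\bar{z},\bar{v},\bar{w}$ be the elements of $\overline{W}$ represented by $\omega_u,\omega_z,\omega_v,\omega$. The key claim yields $\bar{w}=\bar{u}\bar{z}\bar{v}$ with additive $\overline{W}$-lengths, and $\bar{z}\in\overline{W}_I$ since $\omega_z$ has letters in $I$. Moreover, for every $s\in I$ the words $s\omega_u$, $\omega_u s$, $s\omega$, $\omega s$ are reduced in $W$ (as $u,w\in R_I$), hence reduced in $\overline{W}$; this forces $\ell_{\overline{W}}(s\bar{u}),\ell_{\overline{W}}(\bar{u}s)>\ell_{\overline{W}}(\bar{u})$ and likewise for $\bar{w}$, so $\bar{u},\bar{w}\in\overline{R}_I$.

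To conclude, suppose for contradiction that $\bar{w}=\bar{v}'\bar{z}'\bar{u}'$ in $\overline{W}$ with $\bar{u}'\leq\bar{u}$, $\bar{v}'\leq\bar{v}$, $\bar{z}'\in\overline{W}_I$, and $\ell_{\overline{W}}(\bar{w})=\ell_{\overline{W}}(\bar{v}')+\ell_{\overline{W}}(\bar{z}')+\ell_{\overline{W}}(\bar{u}')$. Using the standard subword property of Bruhat order, select reduced expressions $\omega_{u'},\omega_{v'}$ for $\bar{u}',\bar{v}'$ in $\overline{W}$ that are subwords of $\omega_u,\omega_v$ respectively, and any reduced expression $\omega_{z'}$ for $\bar{z}'$ using letters from $I$. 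Their concatenation $\omega'':=\omega_{v'}\omega_{z'}\omega_{u'}$ is reduced in $\overline{W}$ for $\bar{w}$, so by the key claim it is reduced in $W$ for $w$; length-additivity then forces each of $\omega_{v'},\omega_{z'},\omega_{u'}$ to be reduced in $W$ on its own. Taking $v^{\ast},z^{\ast},u^{\ast}\in W$ to be the respective elements they represent produces $w=v^{\ast}z^{\ast}u^{\ast}$ in $W$ with $u^{\ast}\leq u$, $v^{\ast}\leq v$, $z^{\ast}\in W_I$, and additive $W$-lengths, contradicting the hypothesis of the lemma. Corollary~\ref{cor:comcondition} applied in $(\overline{W},S)$ then gives the noncommutativity of $\overline{\scH}^I$. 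The main obstacle will be proving the key claim, in particular controlling the Matsumoto chain of $\overline{W}$-braid moves so that $W$-reducedness is preserved at every step.
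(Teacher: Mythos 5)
Your proposal is correct and follows essentially the same route as the paper: lift fixed reduced expressions to $\overline{W}$, check the hypotheses of Corollary~\ref{cor:comcondition} there, and pull a hypothetical reversed decomposition of $\overline{w}$ back to a forbidden decomposition $w=v^{\ast}z^{\ast}u^{\ast}$ in $W$. Your ``key claim'' (via Matsumoto/Tits and the observation that a $W$-reduced word contains no alternating $s,t$-run longer than $m_{st}$) is a careful justification of the step the paper compresses into ``this gives elements $u'$, $z'$ and $v'$ in $W$ such that $w=v'z'u'$,'' which is indeed the crux since there is no group homomorphism between $W$ and $\overline{W}$ to invoke.
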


\begin{proof} Fix reduced expressions for $u$, $v$ and $z$. Let $\overline{u}$, $\overline{v}$ and $\overline{z}$ be the elements of $\overline{W}$ corresponding to these reduced expressions. Let $\overline{w}=\overline{u}\,\overline{z}\,\overline{v}$. Since $\overline{m}_{st}\geq m_{st}$ for all $s,t\in S$ the expressions $\overline{u},\overline{z},\overline{v}$ and $\overline{w}$ are reduced in $\overline{W}$. Furthermore, $\overline{w}$, $\overline{u}$ and $\overline{v}$ are $I$-reduced (in $\overline{W}$) and $\overline{z}\in \overline{W}_I$.

Suppose, for a contradiction, that $\overline{\scH}^I$ is commutative. By Corollary~\ref{cor:comcondition} there exist elements $\overline{u}',\overline{v}',\overline{z}'\in\overline{W}$ such that $\overline{w}=\overline{v}'\overline{z}'\overline{u}'$ with $\overline{u}'\leq u$, $\overline{v}'\leq v$, $\overline{z}'\in\overline{W}_I$, and $\ell(\overline{w})=\ell(\overline{v}')+\ell(\overline{z}')+\ell(\overline{u}')$. This gives elements $u'$, $z'$ and $v'$ in $W$ such that $w=u'z'v'$. These expressions are not necessarily reduced, and so may need to be further reduced (using the deletion condition). This leads to a contradiction. 
\end{proof}

\subsection{Involutions}

There are various ways to determine whether the minimal length double coset representatives of a spherical Coxeter group are involutions. For example \cite[Theorem~3.1]{CIK} gives a method using the representation theory of the Coxeter group. It is also possible to determine if the double coset representatives are involutions by a direct, elementary argument. Let us outline this in the most involved example~$E_{8,1}$.

\begin{prop}\label{prop:example}  Let $(W,S)$ be the Coxeter system of type $E_8$ and let $I=S\backslash\{1\}$. Each element of $R_I$ is an involution.
\end{prop}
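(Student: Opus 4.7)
The plan is to enumerate $R_I$ explicitly and verify each element is an involution. I would identify $W/W_I$ with the orbit $W\cdot\omega_1$ via $wW_I\mapsto w\omega_1$, where $\omega_1$ is the fundamental weight stabilised by $W_I=W(D_7)$; this orbit has $|W|/|W_I|=2160$ elements. The double cosets $W_I\backslash W/W_I$ then correspond to $W_I$-orbits on this set, separated by the $W_I$-invariant $\mu\mapsto\langle\mu,\omega_1\rangle$ together with further invariants coming from the $D_7$ sub-root-system of $E_8$. A case analysis using explicit $E_8$ root system data will identify these orbits and produce a minimum length representative for each.

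Three representatives should be immediate. Clearly $e$ and $s_1$ are involutions, and the ``longest'' representative will be $w_0w_I$, where $w_0:=w_0^{E_8}$ and $w_I:=w_0^{D_7}$ are the longest elements of $W$ and $W_I$. The formula $\ell(w_0u)=\ell(w_0)-\ell(u)$ shows $w_0w_I\in R_I$ with length $120-42=78$, and since $w_0=-\id$ is central in $W(E_8)$ and $w_I^2=e$, we get $(w_0w_I)^2=w_0^2w_I^2=e$. More broadly, the centrality of $w_0$ implies that the bijection $D\mapsto w_0D$ on double cosets preserves the property $D=D^{-1}$, which is equivalent to the minimum length representative of $D$ being an involution (by uniqueness of the min length representative in each double coset and the fact that $\ell(w^{-1})=\ell(w)$). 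This pairing will reduce the number of independent verifications by roughly half.

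For each remaining representative I would exhibit an explicit minimum length reduced word (found by a greedy search in the weak order, moving $\omega_1$ into the target orbit via successive simple reflections) and then verify $w^2=e$ either by direct squaring, by recognising a palindromic structure in the reduced expression, or by decomposing $w$ as a product of mutually orthogonal reflections via Richardson's criterion for involutions in Coxeter groups. The main obstacle will be the combinatorial bookkeeping for the ``middle'' representatives: identifying the relevant $W_I$-orbits explicitly, computing the minimum length representative in each, and verifying its involution property. The symmetries from $w_0$-centrality and the $D_7$ diagram automorphism (swapping the two short branches of the $D_7$ diagram) should substantially reduce the number of independent cases to verify.
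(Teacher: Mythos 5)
Your overall strategy shares the two structural ideas of the paper's proof: the count $|W|/|W_I|=2160$ as a completeness check, and the centrality of $w_0$ in $W(E_8)$ to pair double cosets $D\mapsto w_0D$ (the paper's ``complementary'' representatives) so that involutivity propagates. Your observations that $e$, $s_1$ and $w_0w_I$ (of length $78$) are involutions, and that $D=D^{-1}$ is equivalent to the minimal representative being an involution, are all correct and appear in the paper in essentially the same form.

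However, as written the proposal has a genuine gap: the actual enumeration of the double cosets, which is the entire content of the proposition, is deferred to ``a case analysis using explicit $E_8$ root system data'' with no mechanism guaranteeing completeness. You would need either to know a priori how many double cosets there are (the paper cites the fact that there are exactly $10$, from Brouwer--Cohen--Neumaier, Table~10.5), or to prove that your proposed invariants genuinely separate the $W_I$-orbits on $W\omega_1$ --- the assertion that $\langle\mu,\omega_1\rangle$ ``together with further invariants'' suffices is not substantiated and is not obviously true. The paper instead certifies completeness by computing $|X_i|=|W_I|/|W_{I\cap w_iIw_i^{-1}}|$ for each exhibited representative and checking that the sizes ($2(1+64+280+448)+14+560$) sum to $2160$. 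A second point worth noting: the largest class ($560$ vertices) is self-paired under $w_0$, so your pairing gives you nothing there and you would be forced to exhibit and square its representative explicitly --- the hardest case. The paper sidesteps this with a parity argument: $D\mapsto D^{-1}$ is an involution on the set of $10$ double cosets, so once $9$ of them are shown to be fixed, the tenth is fixed as well, whence its minimal representative is an involution by uniqueness of minimal length representatives. Incorporating that observation, together with an honest completeness certificate for the enumeration, would close the gap.
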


\begin{proof} Let $\Sigma$ be the Coxeter complex of $(W,S)$ with usual $W$-distance function $\delta(u,v)=u^{-1}v$. 
Let $X$ be the set of vertices of type 1 in $\Sigma$. If $x\in X$ let $C(x)$ denote the set of all chambers of $\Sigma$ containing~$x$. For $x,y\in X$ the set $\delta(C(x),C(y))$ is a double coset $W_IzW_I$, and the $W$-distance $\delta(x,y)$ between $x$ and $y$ is defined to be the minimal length representative of this double coset. If $w\in R_I$ then (see the proof of Proposition~\ref{prop:c}) 
\begin{align}\label{eq:y}
\#\{y\in X\mid \delta(x,y)=w\}=\frac{|W_IwW_I|}{|W_I|}=\frac{|M_{I,w}||W_I|}{|W_I|}=\frac{|W_I|}{|W_{I\cap wIw^{-1}}|}.
\end{align}
 
It is known that there are exactly $10$ double cosets $W_IwW_I$ in $E_8$ (see \cite[Table~10.5]{BCN}). Let $w_0,w_1\ldots, w_{9}$ be the minimal length double coset representatives.  Fix the vertex $x_0\in X$ of type~1 contained in the chamber of $\Sigma$ corresponding to the identity element of $W$. Let $i\in\{0,1,\ldots,9\}$ be arbitrary. Put $S_i=I\cap w_iSw_i^{-1}$, and let $W_i=W_{S_i}=\langle S_i\rangle$. By (\ref{eq:y}) the number of vertices $x\in X$ with $\delta(x_0,x)=w_i$ is equal to the quotient $|W_I|/|W_i|$. The total number of vertices of type $1$ is equal to $|X|=|W|/|W_I|=2160$. Denote by $X_i$ the set of vertices $x\in X$ with $\delta(x_0,x)=w_i$. Thus $|X_0|+|X_1|+\cdots+|X_9|=|X|=2160$.

Let $w$ be the longest element in $W$. Since the opposition relation in $\Sigma$ induces the trivial permutation on $S$ (and this permutation is given by conjugation with $w$), $w$ is central in $W$. Hence if $w_i$ is an involution, then so is $ww_i$, and it interchanges $x_0$ with the unique vertex $x_i'$ opposite $x_i$, where $x_i=w_ix_0$ is the image of $x_0$ under $w_i$. Consequently if $w_i$ is an involution and if $\delta(x_0,x_i')=w_j$ then $w_j$ is also an involution. In this case we say that $w_j$ is \textit{complementary} to $w_i$. Of course it could happen that $i=j$. In this case, $x_i$ and $x_i'$ are contained in opposite chambers, and so the longest element $w$ of $W$ belongs to $w_iW_Iw_iW_I$. Since the length of the longest element in $W_I$ is 42 and since $\ell(w)=120$ this implies that $\ell(w_i)\geq 18$. 

We now apply the above to some specific values of $w_i$. We take $w_0=e$, the identity, and $w_1=s_1=1$. Thus $|X_0|=1$ and $|X_1|=|W(D_7)|/|W(A_6)|=64$, and since $\ell(w_0),\ell(w_1)<18$ we obtain complementary involutions $w_9$ and $w_8$, respectively, with $|X_9|=1$ and $|X_8|=64$. Now put $w_2=13425431$ (which is obtained by considering the residue of a vertex of type 6). The element $w_2$ maps the generators $(3,4,2,5,7,8)$ to $(3,4,5,2,7,8)$, and so one calculates that $|X_2|=|W(D_7)|/|W(D_4\times A_2)|=280$. Since $\ell(w_2)=8<18$ we have a complementary involution $w_7\neq w_2$ with $|X_7|=280$. So far we have accounted for $2(1+64+280)=690$ of the total $2160$ type~$1$ vertices.

In the residue of an element of type $8$ (which is a Coxeter system of type $E_7$) we find the involutive minimal length double coset representative $w_3=13425463576452431$, which maps the generators $(2,4,5,6,7)$ to $(7,6,5,4,2)$. Consequently $|X_3|=|W(D_7)|/|W(A_5)|=448$. As $\ell(w_3)=17<18$ we have another involution $w_6$ with $|X_6|=448$, accounting for $690+2\times 448=1586$ of the $2160$ vertices. Finally we can consider, in each of the 14 residues of type $E_7$ through $x_0$, the element of type 1 opposite $x_0$. This gives rise to another involutive double coset representative $w_4$, with $|X_4|=|W(D_7)|/|W(D_6)|=14$. This one must be self-complementary, as otherwise the unique missing class $X_5$ would also contain $14$ elements and the total number of vertices does not add up to $2160$. Indeed we calculate that $|X_5|=560$. Hence $w_5$ is also self-complementary. But what is more important, it must also be an involution as otherwise $w_5^{-1}$ is a different minimal double coset representative, contradicting the fact that we only have 10 of these. Hence all minimal coset representatives are involutions.
\end{proof}

\subsection{Tables of words to prove noncommutativity}

\textit{Conventions:} We use standard Bourbaki labelling for the spherical and affine types \cite[Plates I--IX]{bourbaki}. The cases $H_3$ and $H_4$ are not given an explicit labelling in Bourbaki. We adopt the labelling of $H_3$ with $m_{12}=3$ and $m_{23}=5$, and of $H_4$ with $m_{12}=m_{23}=3$ and $m_{34}=5$. 

Each word is of the form $w=uw_Is_i$, where $I=S\backslash\{i\}$. We also list the index $k$ used in the argument of Proposition~\ref{prop:test}. The cases where a slight modification of Proposition~\ref{prop:test} is required are labelled by~$(*)$. The precise details for these cases are given in Claim~4 of Section~\ref{section:proof}.

$$
\begin{array}{|c||c|c|c|}
\hline
\textrm{Spherical cases}& u & w_I & k \\
\hline\hline
 D_{n,i} ,  \frac{n}{2}<i<n-1 &\textrm{see below} & \textrm{see below} &n\\
 \hline
 E_{6,5} & 542345 & 1634 & 6 \\
\hline
 E_{7,6} & 65423456 & 17345 & 7 \\
\hline
 E_{8,7} & 7654234567 & 183456 & 8 \\
\hline
 E_{8,2} & 245678345672 & 456345134 & 1 \\
\hline
 F_{4,2} & 232 & 431 & 1 \\
\hline
 H_{3,2} & 232 & 31 & 1 \\
\hline
 H_{4,2} & 23432 & 431 & 1 \\
\hline
 H_{4,4} & 434323434 & 123 & 1(*) \\
\hline
\end{array}
  $$

\noindent The $D_{n,i}$ word (with $n/2<i<n-1$) is 
\begin{align*}
u&=[i(i-1)\cdots(2i-n+1)][(i+1)i\cdots(2i-n+2)]\cdots[(n-1)(n-2)\cdots i]\\
w_I&=\begin{cases}
[n(n-2)(n-3)\cdots(i+1)][12\cdots(i-1)]&\textrm{if $i/2<i<n-2$}\\
n12\cdots (n-3)&\textrm{if $i=n-2$}.
\end{cases}
\end{align*}

 $$
\begin{array}{|c||c|c|c|}
\hline
\textrm{Affine cases}& u & w_I & k \\
\hline\hline
\tilde{B}_{n,i},  1<i<n-1 & i\cdots 320123\cdots i & (i+1)\cdots n(n-1)\cdots(i+1) & i+1 \\
\hline
\tilde{B}_{n,n} & [n\cdots1][n\cdots 2]\cdots [n(n-1)][n] & 023\cdots (n-2)(n-1) &0\\
\hline
\tilde{C}_{n,i} ,  1\leq i< n & i\cdots3210123\cdots i & (i+1)\cdots n(n-1)\cdots (i+1) & i+1 \\
\hline
\tilde{D}_{n,i} ,  1<i<n-1 & i\cdots 320123\cdots i & (i+1)\cdots n(n-2)\cdots(i+1) & i+1 \\
\hline
\tilde{E}_{7,2} & 245341031245342 & 65764534 & 7 \\
\hline
\tilde{E}_{8,1} & 134562453413245676805432456781 &345672456345243  &0(*)\\
\hline
 \tilde{E}_{8,8} & 876542345678 & 1034567 & 0 \\
\hline
 \tilde{F}_{4,1} & 12321 & 4320 & 0 \\
\hline
 \tilde{F}_{4,4} & 43231234 & 3231230123 & 0(*) \\
\hline
  \tilde{G}_{2,1} & 212 & 01 & 0 \\
\hline
  \tilde{G}_{2,2} & 12121 & 02 & 0 \\
\hline
\end{array}
  $$

$$
\begin{array}{|c||c|c|c|}
\hline
\textrm{Infinite non-}& u & w_I & k \\
\textrm{affine cases} & & &\\
\hline\hline
A_2^{1,1,2} &010 &21 &2\\
\hline
A_3^{1,2} &0210 & 2312 & 3 \\
\hline
A_4^{1,3} &032430   &123 &1 \\
\hline
B_2^{1,2} &01210 &212 &2(*) \\
\hline
B_2^{1,1,1}&010 &121 &2\\
\hline
B_3^{1,3} &03230 &12321  &1 \\
\hline
B_3^{3,3} &0323032303230 &1323  &1 \\
\hline
B_4^3 & 03430&234123 & 1(*)\\
\hline
B_5^3 &032430  &1234543 &5\\
\hline
B_6^3 &032430 &123456543 &5\\
\hline
D_4^{1,4} &01240 &123421 &3 \\
\hline
D_5^3 & 03243120 & 3543 &5\\ 
\hline
D_5^{1,5} & 05342350 &12345321 &1 \\
\hline
D_6^3 &03243120 & 346543 & 6 \\
\hline
D_6^{1,6} &06453460 &1234564321 &1 \\
\hline
D_7^3 &03243120 & 34576543 & 7\\
\hline
E_6^3 &0345243013452430 &61345243 & 6\\
\hline
E_7^2 &02435420 &65431243524654376542 &7 \\
\hline
E_7^6 &06543245607654324560 & 1765432456 &1 \\
\hline
E_8^1 & 0134254310 & 654234567813425436542765431 &2(*) \\
\hline
E_8^7 & 076543245670876543245670  & 187654324567 &1  \\
\hline
F_4^2 &02320& 1234232 &4 \\
\hline
F_4^{1,1} &0123210123210 &412321  &4 \\
\hline
F_4^{1,4} &0432340   & 12321&1 \\
\hline
H_3^2 &02320 &32132 &1 \\
\hline
H_3^3 &03230 &2321323 &1 \\
\hline
H_3^{1,1} & 010&232132321 &2(*) \\
\hline
H_4^1 & 012343210 & 43423412324341234321 & 4(*)\\
\hline
H_4^2 & 0234320& 4342312& 1\\
\hline
I_2(5)^{1,1} &010 &2121 &2(*)\\
\hline
I_2(7)^{1} &012120 &12121 &1(*)\\
\hline
A_1^1\times A_2^{1,2} &0120 &11' &1' \\
\hline
A_1^1\times F_4^1 & 0123210 & 1'4321 &1' \\
\hline
A_1^1\times H_3^1 & 0123210 & 1'321 &1' \\
\hline
A_1^1 \times B_3^2 & 02320 & 121' &1'\\
\hline
A_1^1\times D_4^2 & 023420 &1'12 &1' \\
\hline
A_1^1\times E_7^7 & 076542345670 & 11'34567& 1'\\
\hline
A_1^1\times E_8^8 & 08765423456780 & 11'345678 & 1' \\
\hline
\end{array}
$$
(The node of the $A_1$ component in the final $7$ composite cases is labelled by~$1'$).

\end{appendix}


\begin{thebibliography}{100}


\bibitem{AB} P. Abramenko, K. Brown \textit{Buildings: Theory and Applications}, Graduate Texts in Mathematics, \textbf{248}, Springer (2008).

\bibitem{Anderson} M.R. Anderson, \textit{The Noncommutativity of Hecke Algebras Associated to Weyl Groups}, Proceedings of the American Mathematical Society, \textbf{123}, No. 8, p. 2363--2368 (1995).

\bibitem{borel} A. Borel, \textit{Admissible representations of a semi-simple group over a local field with vectors fixed under an Iwahori subgroup}, Invent. Math., \textbf{35}, 233--259, 1976

\bibitem{bourbaki} N. Bourbaki, \textit{Lie Groups and Lie Algebras, Chapters 4--6}, Elements of Mathematics, Springer-Verlag, Berlin Heidelberg New York, 2002.


\bibitem{BC} A.E. Brouwer, A.E. Cohen, \textit{Computation of some parameters of Lie Geometries}, Annals of Discrete Mathematics \textbf{26}, 1--48, (1985).

\bibitem{BCN} A.E. Brouwer, A.M. Cohen, A. Neumaier, \textit{Distance Regular Graphs}, Springer-Verlag, Berlin, New York (1989).


\bibitem{coxeter} H.S.M. Coxeter, \textit{The complete enumeration of finite groups of the form $r_i^2=(r_ir_j)^{k_{ij}}=1$}, J. London Math. Soc. \textbf{10}, 21--25, 1935.


\bibitem{CIK} C.W. Curtis, N. Iwahori, R. Kilmoyer, \textit{Hecke algebras and characters of parabolic type of finite group with $(B,N)$-pairs}, Publications math\'{e}matiques de l'I.H.\'{E}.S., tome \textbf{40} (1971), 81--116.

\bibitem{CR} C.W. Curtis, I. Reiner, \textit{Methods of Representation Theory: With applications to Finite Groups and Orders}, Vol. 2, J. Wiley and Sons, 1987.

\bibitem{howlett} W.N. Franzsen, R. Howlett, \textit{Automorphisms of nearly finite Coxeter groups}, Adv. Geom. \textbf{3}, 301--338, 2003.

\bibitem{gomi} Y. Gomi, \textit{Structure constants of the Hecke Algebras $\scH(S,J)$ and Applications}, J. Algebra, \textbf{183}, 514--544 (1996).


\bibitem{humphreys} J.E. Humphreys, \textit{Reflection Groups and Coxeter Groups}, Cambridge Studies in Advanced Mathematics, \textbf{29}, 1990.


\bibitem{Iwahori} N. Iwahori, \textit{On some properties of groups with $BN$-pairs}, in \textit{Theory of Finite Groups, A Symposium}, Eds. R. Brauer and C.-H. Sah, W.A. Benjamin, Inc., New York, Amsterdam (1969).

\bibitem{krieg} A. Krieg, \textit{Hecke algebras}, Mem. Amer. Math. Soc. \textbf{87}, No. 435, (1990).

\bibitem{kumar} S. Kumar, \textit{Kac-Moody groups, their flag varieties and representation theory}, Progress in Mathematics, \textbf{204}, Birkh\"{a}user, (2002).

\bibitem{lecureux} J. L\'{e}cureux, \textit{Hyperbolic configurations of roots and Hecke algebras}, J. Algebra, \textbf{323}, 1454--1467 (2010).


\bibitem{lehrer} G.I. Lehrer, \textit{On incidence structures in finite classical groups}, Mathematische Zeitschrift, \textbf{147}, 287--299 (1976).

\bibitem{lusztig} G. Lusztig, \textit{Affine Hecke algebras and their graded versions}, Journal of the American Mathematical Society, Vol. 2, No. 3, (1989).


\bibitem{macdonald} I.G. Macdonald, \textit{Spherical functions on a group of $p$-adic type}, Ramanujan Institute for Advanced Study in Mathematics, University of Madras, 1971.

\bibitem{matsumoto} H. Matsumoto, \textit{Analyse Harmonique dans les syst\`{e}mes de Tits Bornologiques de type affine}, LNM, \textbf{590}, Springer-Verlag, 1979.



\bibitem{P1} J. Parkinson, \textit{Buildings and Hecke algebras}, J. Algebra  \textbf{297}  (2006),  no. 1, 1--49.

\bibitem{schwer} C. Schwer, \textit{Galleries, Hall-Littlewood polynomials and structure constants of the spherical Hecke algebra}, International Mathematics Research Notices, 1--31, 2006.


\bibitem{titskac2} J. Tits, \textit{Uniqueness and presentation of Kac-Moody groups over fields}, J. Algebra, \textbf{105}, 2, 542--573, 1987.

\bibitem{Xi} N. Xi, \textit{Representations of Affine Hecke Algebras}, Lecture Notes in Mathematics, Springer-Verlag, (1994)

   
\end{thebibliography}
\end{document}